\newcommand{\ov}[1]{\overline{#1}}
\newcommand{\bh}{\mathbb H}
\newcommand{\bz}{\mathbb Z}
\newcommand{\bp}{\mathbb P}
\newcommand{\br}{\mathbb R}
\newcommand\Sp{\mathbb{S}}
\newcommand{\cO}{\mathcal O}
\newcommand{\cA}{\mathcal A}
\newcommand{\rp}{\br\bp}
\newcommand{\la}{\langle}
\newcommand{\ra}{\rangle}
\newcommand{\Lra}{\Longrightarrow}
\newcommand{\bs}{\backslash}
\newcommand{\wt}{\widetilde}
\newcommand{\wh}{\widehat}
\newcommand{\iso}{\cong}
\newcommand{\al}{\alpha}
\newcommand{\be}{\beta}
\newcommand{\si}{\sigma}
\newcommand{\lam}{\lambda}
\DeclareMathOperator{\im}{im}
\DeclareMathOperator{\sech}{sech}
\DeclareMathOperator{\PSL}{PSL}
\DeclareMathOperator{\SL}{SL}
\newcommand{\bndry}{\partial_\infty \pi_1}
\declaretheorem[name=Theorem,numberwithin=section]{Thm}
\newtheorem{thm}{Theorem}[section]
\newtheorem{Prop}[thm]{Proposition}
\newtheorem{Lem}[thm]{Lemma}
\newtheorem{Cor}[thm]{Corollary}
\theoremstyle{definition}
\theoremstyle{remark}
\newtheorem{Rem}[thm]{Remark}
\numberwithin{equation}{section}
\title{Basmajian's identity in higher Teichm\"uller-Thurston theory}
\author{Nicholas G.~Vlamis}
\address{Department of Mathematics \\ University of Michigan \\ Ann Arbor, MI 48109}
\email{vlamis@umich.edu}
\urladdr{http://www.umich.edu/~vlamis}
\author{Andrew Yarmola}
\address{Mathematics Research Unit \\ University of Luxembourg \\ L-4364 Esch-sur-Alzette, Luxembourg}
\email{andrew.yarmola@uni.lu}
\urladdr{http://math.uni.lu/~yarmola/}
\begin{document}
\begin{abstract} We prove an extension of Basmajian's identity to $n$-Hitchin representations of compact bordered surfaces. For $n=3$, we show that this identity has a geometric interpretation for convex real projective structures analogous to Basmajian's original result. As part of our proof, we demonstrate that, with respect to the Lebesgue measure on the Frenet curve associated to a Hitchin representation, the limit set of an incompressible subsurface of a closed surface has measure zero. 
This generalizes a classical result in hyperbolic geometry.  Finally, we recall the Labourie-McShane extension of the McShane-Mirzakhani identity to Hitchin representations and note a close connection to Basmajian's identity in both the hyperbolic and the Hitchin settings.
\end{abstract}

\maketitle

\section{Introduction}

Let $\Sigma$ be a connected oriented compact surface with nonempty boundary whose double has genus at least 2.  Given a finite area hyperbolic metric $\si$ on $\Sigma$ such that $\partial \Sigma$ is totally geodesic, an \textit{orthogeodesic} in $(\Sigma, \si)$ is defined to be an oriented proper geodesic arc perpendicular to $\partial\Sigma$ at both endpoints; we denote the collection of all such arcs as $\cO(\Sigma, \sigma)$.  The \textit{orthospectrum} $|\cO(\Sigma,\sigma)|$ is the multiset of lengths of orthogeodesics counted with multiplicity.  
In \cite{Basmajian:1993eu}, Basmajian proved a remarkable identity on Teichm\"uller space that computes the length of the boundary as a sum over the orthospectrum:
\[
\ell_\si(\partial \Sigma) = \sum_{\ell \in |\cO(\Sigma,\sigma)|} 2\log\coth\left(\frac\ell2\right) \, ,
\]
where $\ell_\si(\partial \Sigma)$ denotes the length of $\partial \Sigma$ as measured in $\si$.  

In this paper, we formulate an extension of this identity to the setting of Hitchin representations (see \S\ref{sec:hitchin}) using Labourie's notion of cross ratios introduced in \cite{LabourieCross}.  Hyperbolic identities have played a surprising role in understanding the geometry  of the moduli space of Riemann surfaces (see \cite{mirzakhani}).  We expect these identities to continue to play a role in the study of higher Teichm\"uller-Thurston theory.  Before stating our main theorem, we need to introduce some notation and definitions, which will be elaborated on in \S\ref{sec:background}.

Recall that a Hitchin representation $\rho\co \pi_1(\Sigma) \to \PSL(n,\R)$ gives rise to a notion of length given by 
$$\ell_\rho(\gamma) = \log\left|\frac{\lambda_{\mathrm{max}}(\rho(\gamma))}{\lambda_{\mathrm{min}}(\rho(\gamma))}\right| \, ,$$
where $\lambda_{\mathrm{max}}(\rho(\gamma))$ and $\lambda_{\mathrm{min}}(\rho(\gamma))$ are the eigenvalues of maximum and minimum absolute value of $\rho(\gamma)$, respectively.

Assume that $\Sigma$ has $m$ boundary components and let $\cA = \{\al_1, \ldots, \al_m\}$ be a collection of primitive peripheral elements of $\pi_1(\Sigma)$ representing distinct boundary components oriented such that the surface is to the left.  We call such a collection a \textit{positive peripheral marking}.  Set $H_i = \la \al_i\ra$ and define the \textit{orthoset} to be the following disjoint union of double cosets:
\[
\cO(\Sigma, \cA) = \left(\bigsqcup_{1\leq i, j \leq m} H_i \bs \pi_1(\Sigma) / H_j\right) \setminus \left( \bigcup_{i=1}^m \{H_ieH_i\} \right) ,
\]
where $e\in \pi_1(\Sigma)$ is the identity.  The orthoset serves as an algebraic replacement for $\cO(\Sigma, \sigma)$. We demonstrate in \S\ref{sec:cosets} that there is a bijection between $\cO(\Sigma, \cA)$ and $\cO(\Sigma, \sigma)$ in the hyperbolic setting.

A \textit{cross ratio} on the Gromov boundary $\bndry(\Sigma)$ of $\pi_1(\Sigma)$ is a H\"older function defined on 
$$\bndry(\Sigma)^{4*} = \{(x,y,z,t) \in \bndry(\Sigma)^4 \colon x \neq t \text{ and } y\neq z\}$$
invariant under the diagonal action of $\pi_1(\Sigma)$, which satisfies several symmetry conditions (see \S\ref{sec:ratios} for a full definition).
In \cite{LabourieCross}, Labourie associates a cross ratio $B_\rho$ to a Hitchin representation $\rho$ of a closed surface. For surfaces with boundary, the same is done in \cite{Labourie:2009gb}. Define the function $G_\rho\co \cO(\Sigma, \cA) \to \br$ by 
$$G_\rho(H_igH_j) = \log B_\rho(\al_i^+, g\cdot \al_j^+, \al_i^-, g\cdot \al_j^-) \, ,$$
where $\al^+, \al^- \in \bndry(\Sigma)$ are the attracting and repelling fixed points  of $\al\in \pi_1(S)$, respectively. 
We can now state our main theorem:

\begin{restatable}{Thm}{basidentity}\label{thm:identity}{\rm (Basmajian's identity for Hitchin Representations)}
Let $\Sigma$ be an oriented compact connected surface  with $m>0$ boundary components whose double has genus at least 2. Let $\cA=\{\al_1, \ldots, \al_m\}$ be a positive peripheral marking. If $\rho$ is a Hitchin representation of $\pi_1(\Sigma)$, then 
$$\ell_\rho(\partial \Sigma) \, \, = \sum_{x\in \cO(\Sigma, \cA)} G_\rho(x) \,,$$
where $\ell_\rho(\partial \Sigma) = \sum_{i=1}^m \ell_\rho(\al_i).$  Furthermore, if $\rho$ is Fuchsian, this is Basmajian's identity.
\end{restatable}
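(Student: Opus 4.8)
The plan is to carry out Basmajian's original argument with the boundary circle $\partial\bh^2$ replaced by the Gromov boundary $\bndry(\Sigma)$, with hyperbolic arclength along a lift of $\partial\Sigma$ replaced by the measure determined by Labourie's cross ratio $B_\rho$, and with the orthogonal-projection intervals of the lifted boundary geodesics replaced by ``gap intervals'' cut out by the peripheral elements. Fix an auxiliary hyperbolic structure, so that $\bndry(\Sigma)$ is identified with the limit set $\Lambda\subset S^1$, and for each $\al_i$ let $A_i\subset S^1$ be the open arc bounded by $\al_i^-$ and $\al_i^+$ on the side with the surface to the left; then $\al_i$ preserves $A_i$ and acts on it by translation, one has $\Lambda\subset\ov{A_i}$, and $A_i/\la\al_i\ra$ is the circle playing the role of the $i$-th boundary component.

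The first step is to build the length measure. Using the cocycle identities from the definition of a cross ratio (\S\ref{sec:ratios}), I would show that $[x,y]\mapsto\log B_\rho(\al_i^+,y,\al_i^-,x)$ is additive along $A_i$ and hence defines an $\la\al_i\ra$-invariant (positive, by positivity of $B_\rho$) Borel measure $\mu_i$ on $A_i$; its total mass over $A_i/\la\al_i\ra$ is the period of $B_\rho$ along $\al_i$, which equals $\ell_\rho(\al_i)$ by the construction of $B_\rho$. For $g\in\pi_1(\Sigma)$ and $1\le j\le m$ with $g\la\al_j\ra g^{-1}\ne\la\al_i\ra$, the two points $g\cdot\al_j^{\pm}$ lie in $A_i$ and bound a sub-arc $I_{g,j}\subset A_i$; additivity of $\log B_\rho$, together with the fact that a positive peripheral marking forces $g\cdot\al_j^-$ and $g\cdot\al_j^+$ to appear along $A_i$ in the order compatible with the orientation of $\al_i$, gives $\mu_i(I_{g,j})=\log B_\rho(\al_i^+,g\cdot\al_j^+,\al_i^-,g\cdot\al_j^-)=G_\rho(H_igH_j)$.

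The second step is the bookkeeping of \S\ref{sec:cosets}: the arcs $I_{g,j}$ are exactly the connected components of $A_i\setminus\Lambda$, so they are pairwise disjoint and $A_i=(A_i\cap\Lambda)\sqcup\bigsqcup I_{g,j}$; modulo $\la\al_i\ra$ they are indexed precisely by the double cosets $\bigsqcup_{j}H_i\bs\pi_1(\Sigma)/H_j$ with the diagonal coset $H_ieH_i$ removed, that is, by the part of $\cO(\Sigma,\cA)$ whose first index is $i$ (equivalently, by the orthogeodesics with a foot on $\al_i$). Hence $\ell_\rho(\al_i)=\mu_i\big((A_i\cap\Lambda)/\la\al_i\ra\big)+\sum_{j}\sum_{H_igH_j}G_\rho(H_igH_j)$, and the double sum converges because every term is positive with partial sums bounded by $\ell_\rho(\al_i)$. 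It remains to see that $\mu_i\big((A_i\cap\Lambda)/\la\al_i\ra\big)=0$: on a fundamental domain for $\la\al_i\ra$ the measure $\mu_i$ is comparable to the Lebesgue measure on the Frenet curve, so this is precisely the statement that the limit set of the incompressible subsurface $\Sigma$ has Lebesgue measure zero on that curve (which one may invoke after realizing $\rho$, if necessary, as the restriction of a Hitchin representation of a closed surface containing $\Sigma$). Summing the resulting identities over $i$ gives $\ell_\rho(\partial\Sigma)=\sum_{x\in\cO(\Sigma,\cA)}G_\rho(x)$. I expect this measure-zero statement to be the main obstacle: it extends the classical fact that a finitely generated Fuchsian group of the second kind has limit set of Lebesgue measure zero, and in the Hitchin setting its proof requires understanding the Lebesgue measure on the Frenet curve and exploiting the Anosov/contraction properties of $\rho$, in the style of the ergodic theory of geometrically finite groups.

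For the final assertion, let $\rho$ be Fuchsian, so $n=2$ and $B_\rho$ is the classical cross ratio on $S^1=\partial\bh^2$, normalized so that its period is the hyperbolic length $\ell_\rho(\al_i)=\ell_\si(\al_i)$. Then $\mu_i$ is hyperbolic arclength on the axis of $\al_i$ transported by orthogonal projection, $I_{g,j}$ is the orthogonal-projection shadow of the axis of $g\al_jg^{-1}$, and $\cO(\Sigma,\cA)$ is identified with the orthospectrum $\cO(\Sigma,\si)$ as in \S\ref{sec:cosets}. Normalizing the axis of $\al_i$ to the imaginary axis and computing the classical cross ratio directly gives $B_\rho(\al_i^+,g\cdot\al_j^+,\al_i^-,g\cdot\al_j^-)=\coth^2(\ell/2)$, where $\ell$ is the length of the common perpendicular between the two axes, i.e.\ of the orthogeodesic corresponding to $H_igH_j$. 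Thus $G_\rho(H_igH_j)=2\log\coth(\ell/2)$, and the identity becomes $\ell_\si(\partial\Sigma)=\sum_{\ell\in|\cO(\Sigma,\si)|}2\log\coth(\ell/2)$, which is Basmajian's identity.
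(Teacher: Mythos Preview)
Your proposal is correct and follows essentially the same approach as the paper. The paper packages your ``length measure'' $\mu_i$ as a single monotone map $F_B\colon (\al_i^+,\al_i^-)\to\br$, $F_B(x)=\log B_\rho(\al_i^+,x,\al_i^-,\zeta)$, shows it is a homeomorphism onto its image satisfying $F_B(\al_i\cdot x)=F_B(x)-\ell_\rho(\al_i)$, and then reads off interval lengths in $\br/\ell_\rho(\al_i)\bz$; this is exactly your $\mu_i$ in cumulative-distribution form, and your additivity argument via the cocycle identities is precisely what makes $F_B$ well defined. The double-coset bookkeeping, the measure-zero step (handled in the paper via the Hitchin double and Lemma~\ref{lem:zero}, which is the realization you anticipate), and the Fuchsian reduction all match, the only cosmetic difference being that the paper derives the $2\log\coth(\ell/2)$ formula from the Klein-model computation of Proposition~\ref{prop:rp2} rather than your direct upper-half-plane cross-ratio computation.
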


We note that by cutting a surface along a simple closed geodesic, one can see that Basmajian's original identity and the Collar Lemma in hyperbolic geometry are intimately related.  Recently, Lee-Zhang \cite{LeeCollar} gave an extension of the Collar Lemma to the setting of Hitchin representations.  It would be interesting to understand the relationship between the Collar Lemma and Basmajian's identity in higher Teichm\"uller-Thurston theory.

In order to prove Theorem \ref{thm:identity}, we need to understand the measure of $\bndry(\Sigma)$ in the limit set of its double.  In \cite{labourie1}, Labourie defines a H\"older map $\xi_\rho\co \bndry(S) \to \bp\br^n$ for an $n$-Hitchin representation $\rho$ of a closed surface $S$, which we call the {\it limit curve} associated to $\rho$.  The image of this curve is a $C^{1+\al}$ submanifold and thus determines a measure class $\mu_\rho$ on $\bndry(S)$ via the pullback of the Lebesgue measure.  With respect to this measure class, we prove: 

\begin{restatable}{Thm}{thmmeasure}\label{thm:measure}
Let $S$ be an oriented closed surface and $\Sigma\subset S$ an incompressible subsurface.  Let $\rho$ be a Hitchin representation of $S$ and $\xi_\rho$ the associated limit curve.  If $\mu_\rho$ is the pullback of the Lebesgue measure on the image of $\xi_\rho$, then $\mu_\rho(\bndry(\Sigma)) = 0$. 
\end{restatable}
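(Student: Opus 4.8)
The plan is to run, in the Hitchin setting, the classical shadow--covering proof that an infinite--index subgroup of a cocompact Fuchsian group has a limit set of zero Lebesgue measure. We may assume $\Sigma\neq S$, so that $\Sigma$ has nonempty boundary; let $H\le G:=\pi_1(S)$ be the image of $\pi_1(\Sigma)$ under the inclusion $\Sigma\hookrightarrow S$, so that $H$ has infinite index in $G$. Since $\Sigma$ is incompressible, $H$ is quasiconvex in $G$, and we identify $\bndry(\Sigma)$ with the limit set $\Lambda_H\subset\bndry(S)$. Because $\rho$ is Hitchin and $H$ is quasiconvex, the restriction $\rho|_H$ is Anosov with limit curve $\xi_\rho|_{\Lambda_H}$, and every point of $\Lambda_H$ is a conical limit point of $\rho(H)$, uniformly.

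Fix a basepoint $o$ in the symmetric space, let $\sigma_1(\rho\gamma)\ge\sigma_2(\rho\gamma)\ge\cdots$ be the singular values of $\rho(\gamma)$, and set $a_1(\rho\gamma)=\log\!\bigl(\sigma_1(\rho\gamma)/\sigma_2(\rho\gamma)\bigr)$, the first simple root applied to the Cartan projection. For $h\in H$ let $\mathcal O(h)\subset\bndry(S)$ be the shadow, viewed from $o$, of a ball of fixed radius about $\rho(h)o$. Two facts drive the argument. First, a \emph{shadow estimate}: since $\xi_\rho$ is $C^{1+\al}$ and is tangent at the attracting fixed point of $\rho(h)$ to the plane of the top two singular directions of $\rho(h)$ --- the contracted direction along which $\rho(h)$ contracts most slowly --- the Frenet curve leaves the shadow at rate comparable to $\sigma_2(\rho h)/\sigma_1(\rho h)$, so that $\mu_\rho(\mathcal O(h))\asymp e^{-a_1(\rho h)}$ with constants independent of $h$; moreover, by uniform conicality, for every $T>0$ the shadows $\mathcal O(h)$ with $a_1(\rho h)\ge T$ cover $\Lambda_H$. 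Second, \emph{convergence of the Poincar\'e series}: the critical exponent $\delta=\inf\{\,s:\sum_{h\in H}e^{-s\,a_1(\rho h)}<\infty\,\}$ equals the first simple--root entropy $h_{\al_1}(\rho|_H)$ of $\rho|_H$ (by orbital counting for Anosov representations), and $h_{\al_1}(\rho|_H)<1$. Indeed $h_{\al_1}(\rho|_H)\le h_{\al_1}(\rho)$, and either $\rho$ is not Fuchsian, in which case $h_{\al_1}(\rho)<1$ by the entropy rigidity theorem of Potrie--Sambarino, or $\rho$ is Fuchsian, in which case $a_1(\rho h)$ equals the hyperbolic translation length of $h$ and $h_{\al_1}(\rho|_H)$ is the critical exponent of the infinite--covolume convex--cocompact Fuchsian group $\rho(H)$, which is $<1$ by the classical Patterson--Sullivan theory.

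Granting these two facts the theorem follows at once: for every $T>0$,
\[
\mu_\rho\bigl(\bndry(\Sigma)\bigr)=\mu_\rho(\Lambda_H)\ \le\!\!\sum_{h\in H,\ a_1(\rho h)\ge T}\!\!\mu_\rho(\mathcal O(h))\ \le\ C\!\!\sum_{h\in H,\ a_1(\rho h)\ge T}\!\!e^{-a_1(\rho h)},
\]
and the right--hand side is the tail of a convergent series, hence tends to $0$ as $T\to\infty$; therefore $\mu_\rho(\bndry(\Sigma))=0$. When $\rho$ is Fuchsian this recovers the classical statement.

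The main obstacle is the shadow estimate of the second paragraph: making precise how the $C^{1+\al}$ Frenet curve meets the (highly anisotropic) shadows of $\rho(H)$ and extracting $\mu_\rho(\mathcal O(h))\asymp e^{-a_1(\rho h)}$, together with the uniform covering property. This is where the regularity of $\xi_\rho$ and the proximal dynamics of the Anosov representation $\rho|_H$ enter essentially, and where one must keep careful track of the distinction between singular values and eigenvalues and invoke the orbital counting that identifies the exponential growth rate of $\{h\in H:a_1(\rho h)\le T\}$ with $h_{\al_1}(\rho|_H)$; by contrast, the input $h_{\al_1}(\rho|_H)<1$ is either a direct citation or classical hyperbolic geometry.
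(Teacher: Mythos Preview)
Your strategy is a legitimate alternative but takes a very different route from the paper, and it imports substantially heavier machinery while leaving its central technical step open.

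The paper's argument avoids Patterson--Sullivan theory entirely and runs in three soft steps. First, a direct computation from the $C^{1+\al}$ parametrisation shows $\mu_\rho$ is H\"older quasi-invariant under $\pi_1(S)$, and a theorem of Ledrappier then makes $\mu_\rho$ ergodic. Second, in the special case $S=\wh\Sigma$ (the Hitchin double of $\Sigma$), ergodicity forces the $\pi_1(\wh\Sigma)$-saturate $U=\bigcup_g g\cdot\bndry(\Sigma)$ to be null or conull; the doubling involution $J_n$ preserves the limit curve and carries $U$ to another invariant set $U'$ with $U\cap U'$ countable (the peripheral fixed points), so neither is conull and $\mu_{\wh\rho}(\bndry(\Sigma))=0$. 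Third, for arbitrary $S$, one observes that $\xi_\rho(\bndry(\Sigma))$ and $\xi_{\wh\rho}(\bndry(\Sigma))$ are the \emph{same} subset $\Lambda_\Sigma\subset\bp\br^n$, places both curves in a common affine chart, and transfers ``$\mathcal{H}^1(\Lambda_\Sigma)=0$'' from the double to $S$ via the area formula. No shadows, no Poincar\'e series, no entropy bounds.

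Your route instead needs (i) a shadow lemma for the Frenet curve, which you rightly flag as the main obstacle --- and note a slip here: the tangent to $\xi_\rho$ at $\xi_\rho(h^+)$ lies in $\xi_2(h^+)$, which is spanned by the top two \emph{eigen}directions of $\rho(h)$, not its singular directions, so a Morse-type comparison is needed before your tangency heuristic bites; (ii) uniform conical covering of $\Lambda_H$ by deep shadows, via the Anosov property of $\rho|_H$; and crucially (iii) the strict inequality $h_{\al_1}(\rho)<1$ of Potrie--Sambarino in the non-Fuchsian case to make the Poincar\'e series converge at $s=1$. Each ingredient is available in the literature, and your approach morally yields more (it bounds the Hausdorff dimension of $\Lambda_H$ in the spectral metric), but the paper's ergodicity-plus-symmetry argument is shorter, essentially self-contained modulo Ledrappier, and sidesteps the entropy rigidity theorem altogether.
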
 

This result generalizes a classical fact about the measure of the limit set of a subsurface of a closed hyperbolic surface (see \cite[Theorem 2.4.4]{Nicholls:1989ij}).

This paper is motived by and heavily relies on the framework introduced by Labourie and McShane in \cite{Labourie:2009gb} where the authors give an extension of the McShane-Mirzakhani identity  \cite{McShane:1991ug, McShane:1998dp, mirzakhani} to the setting of Hitchin representations.  The goal of \S\ref{sec:connections} is to relate these two identities. Both identities calculate the length of the boundary by giving full-measure decompositions.  We explain how these decompositions are related in both the classic hyperbolic setting as well as the Hitchin setting.  

In \S\ref{sec:projective} we give a geometric picture and motivation for some definitions and techniques by considering the case of 3-Hitchin representations, which correspond to convex real projective structures on surfaces as seen in the work of Choi and Goldman \cite{Goldman:1990vp, Choi:1993iz}. We also demonstrate that our formulation recovers Basmajian's identity for a Fuchsian representation.

\subsection*{Acknowledgements}
The first author would like to thank S.P. Tan for getting him to think about identities on real projective surfaces and would also like to thank T. Zhang for helpful conversations.  The second author would like to thank G. McShane for hosting him at the Insitut Fourier and for many helpful conversations. Both authors thank Andr\'es Sambarino for a helpful email correspondence and the referee for pointing out corrections and improving the exposition.

The first author was supported in part by NSF RTG grant 1045119. The second author acknowledges support from U.S. National Science Foundation grants DMS 1107452, 1107263, 1107367 "RNMS: GEometric structures And Representation varieties" (the GEAR Network).


\section{Background}\label{sec:background}

\subsection{Hitchin representations}\label{sec:hitchin}

Let $\Sigma$ be a connected compact oriented surface possibly with boundary and with negative Euler characteristic.  A homomorphism $\rho: \pi_1(\Sigma) \to \PSL(2,\br)$ is said to be \textit{Fuchsian} if it is faithful, discrete, and convex cocompact.  Let $\iota\co \PSL(2,\br) \to \PSL(n,\br)$ be a preferred representative  arising from the unique irreducible representation of $\SL(2,\R)$ into $\SL(n,\R)$. An \textit{$n$-Fuchsian} homomorphism is defined to be a homomorphism $\rho$ that factors as $\rho= \iota\circ \rho_0$, where $\rho_0$ is Fuchsian. 

Following the definition in \cite{Labourie:2009gb}, a \textit{Hitchin} homomorphism from $\pi_1(\Sigma)\to \PSL(n,\br)$ is one that may be deformed into an $n$-Fuchsian homomorphism such that the image of each boundary component stays purely loxodromic at each stage of the deformation. An element of $\PSL(n,\br)$ is \textit{purely loxodromic} if it has all real eigenvalues with multiplicity 1. 

For the rest of this paper, we will let $\rho$ denote the conjugacy class of a Hitchin homomorphism and refer to this class as a Hitchin representation.

\subsection{Doubling a Hitchin represenation}\label{doubling}
In this section, we will recall relevant details from the construction of Labourie and McShane on doubling Hitchin representations. See \cite[\S9]{Labourie:2009gb} for a complete discussion.

Let $\Sigma$ be a connected compact oriented surface with boundary whose double $\wh\Sigma$ has genus at least 2 and let $\rho$ be an $n$-Hitchin representation of $\pi_1(\Sigma)$. Fix a point $v \in \partial \Sigma$ and a primitive element $\partial_v \in \pi_1(\Sigma, v)$ corresponding to the boundary component containing $v$.  One can choose $R\co \pi_1(\Sigma, v) \to \mathrm{PSL}(n,\br)$ in the conjugacy class of $\rho$ such that $R(\partial_v)$ is a diagonal matrix with decreasing entries. Such a representative is called a \textit{good representative}.

There are two injections $\iota_0, \iota_1\co \Sigma \to \widehat \Sigma$ and an involution $\iota: \wh\Sigma \to \wh\Sigma$ fixing all points on $\partial \Sigma$ such that $\iota\circ\iota_0 = \iota_1$.   For $\gamma \in \pi_1(\wh \Sigma,v)$, define $\bar \gamma = \iota_*(\gamma)$.    
Let $J_n$ denote the $n\times n$ matrix whose $(i,j)^{th}$ entry satisfies:

\[
[J_n]_{ij} = 
\left\{
\begin{array}{l l}
(-1)^{i-1} & \text{when } i =j \\
0 & \text{otherwise}
\end{array}\right. 
\]
By \cite[Corollary 9.2.2.4]{Labourie:2009gb} there exists a unique Hitchin representation $\wh\rho$ of $\pi_1(\wh\Sigma)$ restricting to $\rho$ and satisfying the following condition. For any good representative $R$ of $\rho$ there exists $\wh R: \pi_1(\wh \Sigma,v) \to \mathrm{PSL}(n,\br)$ in the conjugacy class of $\wh\rho$ with
$$\wh R(\bar \gamma) = J_n\cdot\wh R(\gamma) \cdot J_n$$
for all $\gamma \in \pi_1(\wh \Sigma,v)$.  The representation $\wh \rho$ is called the \textit{Hitchin double of $\rho$} and we will refer to $\wh R$, as constructed from $R$, as a \textit{good representative of} $\wh\rho$.

From this construction and \cite[Theorem 1.5]{labourie1}, it follows that for a Hitchin representation $\rho$, the image $\rho(\gamma)$ of any nontrivial element of $\pi_1(\Sigma)$ is purely loxodromic.  In particular, associated to a Hitchin representation $\rho$ there is a length function $\ell_\rho$ defined by 
\begin{equation}\label{eq:length}
\ell_\rho(\gamma) := \log\left|\frac{\lambda_{\mathrm{max}}(\rho(\gamma))}{\lambda_{\mathrm{min}}(\rho(\gamma))}\right| \, ,
\end{equation}
where $\lambda_{\mathrm{max}}(\rho(\gamma))$ and $\lambda_{\mathrm{min}}(\rho(\gamma))$ are the eigenvalues of maximum and minimum absolute value of $\rho(\gamma)$, respectively.  Note that for a 2-Hitchin representation (i.e. a Fuchsian representation) this length function agrees with hyperbolic length.

\subsection{The boundary at infinity}\label{sec:boundary}

Let $\Sigma$ be a connected compact oriented surface  with negative Euler characteristic and choose a finite area hyperbolic metric $\sigma$ such that if $\partial \Sigma \neq \emptyset$, then $\partial\Sigma$ is totally geodesic.  We can identify the universal cover $\wt\Sigma$ of $\Sigma$ with $\bh^2$ if $\partial \Sigma = \emptyset$ or with a convex subset of $\bh^2$ cut out by disjoint geodesics in the case that $\partial \Sigma \neq \emptyset$.

One defines the \textit{boundary at infinity} $\bndry(\Sigma)$ of $\pi_1(\Sigma)$ to be $\overline{\wt \Sigma} \cap \partial_\infty \bh^2$. With this definition, it makes sense to talk about H\"older functions on $\bndry(\Sigma)$. Here, the metric on $\partial_\infty \bh^2$ comes from its identification with the unit circle $\Sp^1_\infty$. Recall that a map $f : X \to Y$ between metric spaces is $\al$-H\"older for $0 < \al \leq 1$, if there exists $C > 0$ such that, $$d_Y(f(x),f(y)) \leq C \,  d_X(x,y)^{\al} \quad \text{ for all }  x,y \in X$$
Clearly, H\"older functions are closed under composition, though the constant may change. For any two hyperbolic metrics $\sigma_1, \sigma_2$ on $\Sigma$, there exists a unique $\pi_1(\Sigma)$-equivariant quasisymmetric map $\bndry(\Sigma, \sigma_1) \to \bndry(\Sigma, \sigma_2)$ (see \cite[IV.A]{Ahlfors:1966ud}). This map is a H\"older homeomorphism (see \cite[Lemma 1]{Gardiner:2002em}) and therefore a H\"older map on $\bndry(\Sigma)$ will remain so if we choose a different metric. This definition of $\bndry(\Sigma)$ topologically coincides with the Gromov boundary of a hyperbolic group (see \cite[III.H.3]{Bridson:2013ky}), however the H\"older structure is additional.

Whenever $\Sigma$ is closed, $\bndry(\Sigma) \cong \Sp^1_\infty$. If $\Sigma$ has boundary and a double of genus at least 2, then $\bndry(\Sigma)$ is a Cantor set. Further, $\bndry(\Sigma)$ is identified as a subset of $\Sp^1_\infty$ and therefore admits a natural cyclic ordering from the orientation of $\Sigma$. For convention, we will view the ordering as counterclockwise.  

We will use the notation $(x, y) \subset \bndry(\Sigma)$ to denote the open set consisting of points $z$ such that the tuple $(x,z,y)$ is positively oriented.  Note that $(y,x) \cap  (x,y) = \emptyset$.

We say that a quadruple $(x,y,z,t)$ is {\it cyclically ordered} if the triples $(x,y,z)$, $(y,z,t)$ and $(z,t,x)$ are either all positively or negatively oriented.

\subsection{The Frenet Curve}\label{sec:frenet}

Let $\mathscr{F}$ be the complete flag variety for $\R^n$, i.e. the space of all maximal sequences $V_1 \subset V_2 \subset \cdots\subset V_{n-1}$ of proper linear subspaces of $\R^n$. Consider a curve $\Xi \co \Sp^1\to \mathscr{F}$ with $\Xi = (\xi_1, \xi_2, \ldots, \xi_{n-1})$. We say that $\Xi$ is a \textit{Frenet curve} if 
\begin{itemize}
\item for all sets of pairwise distinct points $(x_1, \ldots, x_l)$ in $\Sp^1$ and positive integers $d_1 +\cdots + d_l = d \leq n,$ 
$$ \bigoplus_{i=1}^l \xi_{d_i}(x_i) = \R^d\, .$$

\item for all $x$ in $\Sp^1$ and positive integers $d_1 +\cdots + d_l = d \leq n $,
$$\lim_{(y_1, \ldots, y_l)\to x, \atop y_i \text{ all distinct}} \left(\bigoplus_{i=1}^{i=l} \xi_{d_i}(y_i)\right) = \xi_d(x) \,.$$
\end{itemize}

We call $\xi = \xi_1$ the {\it limit curve} and $\theta = \xi_{n-1}$ the \textit{osculating hyperplane}. The second property above guarantees that the image of $\xi$ is a $C^{1+\al}$-submanifold of $\bp\br^n$. 

It turns out that given a Hitchin representation of a closed surface, one can construct an associated Frenet curve.  As a set of points, this curve is the closure of the attracting fixed points of $\rho(\gamma)$ for all $\gamma \in \pi_1(S)$.

\begin{Thm}[{\cite[Theorem 1.4]{labourie1}}]
\label{thm:frenet}
Let $\rho$ be an $n$-Hitchin representation of the fundamental group of a closed connected oriented surface $S$ of genus at least 2.  Then there exists a $\rho$-equivariant H\"older Frenet curve on $\bndry(S)$ .
\end{Thm}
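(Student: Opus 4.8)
The plan is to exhibit the Frenet curve as the boundary data of a \emph{dominated splitting} associated to the geodesic flow, verifying the required structure first in the explicit Fuchsian case and then propagating it across the connected Hitchin component.

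First I would set up the dynamics. Fix an auxiliary hyperbolic metric on $S$ and let $\phi_t$ be the geodesic flow on $T^1S$, whose universal cover we identify with $\bndry(S)^{(2)}\times\br$, where $\bndry(S)^{(2)}$ denotes ordered distinct pairs in $\bndry(S)$. Given $\rho\co\pi_1(S)\to\PSL(n,\br)$, form the flat $\br^n$-bundle $E_\rho=\widetilde{T^1S}\times_\rho\br^n$ over $T^1S$; the geodesic flow lifts, via the flat connection, to a flow $\widehat\phi_t$ on $E_\rho$. Call $\rho$ \emph{Anosov} if $E_\rho$ carries a continuous $\widehat\phi_t$-invariant splitting $E_\rho=L_1\oplus\cdots\oplus L_n$ into line subbundles that is \emph{dominated}, meaning that along every orbit the ratio $\lVert\widehat\phi_t v_{i+1}\rVert/\lVert\widehat\phi_t v_i\rVert$ decays uniformly exponentially in forward time for unit $v_j\in L_j$. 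Granting this, the Frenet curve is read off as follows: pulled back to $\widetilde{T^1S}$, each $L_i$ becomes a line field on $\bndry(S)^{(2)}\times\br$; since the lifted flow acts trivially on fibers in the flat trivialization, $\widehat\phi_t$-invariance forces this line field to be independent of the $\br$-coordinate, and a standard consequence of domination is that the expanding subbundle $\widetilde L_1\oplus\cdots\oplus\widetilde L_d$ is moreover independent of the backward endpoint. This produces maps $\xi_d\co\bndry(S)\to\mathrm{Gr}_d(\br^n)$ with $\xi_1\subset\cdots\subset\xi_{n-1}$, i.e.\ a flag curve $\Xi$; equivariance is immediate since $E_\rho$ and its splitting descend from $\pi_1(S)$-equivariant data, and Hölder regularity follows from the classical fact that the finest invariant subbundles of an Anosov flow vary Hölder-continuously (Anosov; cf.\ Hirsch--Pugh--Shub), transported to $\bndry(S)$ via the Hölder structure on the boundary from \S\ref{sec:boundary}. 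Restricting to the periodic orbit of $\gamma\in\pi_1(S)$ shows $\xi_1(\gamma^+)$ is the top eigenline of $\rho(\gamma)$, so $\xi_1$ is the closure of the attracting fixed points.

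It remains to show every Hitchin representation is Anosov and that $\Xi$ is genuinely Frenet. For the first claim I would show the Anosov locus is open, closed and nonempty in the Hitchin component, which is connected by Hitchin's theorem. The $n$-Fuchsian locus is Anosov by inspection: for $\rho=\iota\circ\rho_0$ the limit curve is the rational normal (Veronese) curve $\br\bp^1\to\br\bp^{n-1}$, $[v]\mapsto[v^{\otimes(n-1)}]$ on $\mathrm{Sym}^{n-1}\br^2\iso\br^n$, and the irreducible $\PSL(2,\br)$-action refines the hyperbolic splitting of the geodesic flow into its $n$ weight lines, which are manifestly dominated. Openness is structural stability --- a dominated splitting of a flow-equivariant cocycle persists under $C^0$-small perturbation, by a graph-transform/contraction argument on candidate line fields, and the cocycle depends continuously on $\rho$. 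Closedness inside the Hitchin component needs a limiting argument: along a convergent sequence of Anosov representations remaining in the Hitchin component, the boundary holonomies stay purely loxodromic with controlled eigenvalue gaps --- exactly the condition cutting out the component here --- which keeps the domination constants uniform, so the splittings subconverge. Hence every Hitchin $\rho$ is Anosov and carries an equivariant Hölder flag curve.

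Finally one must verify the Frenet conditions: $\bigoplus_i\xi_{d_i}(x_i)=\br^{d}$ for distinct $x_i$ with $\sum d_i=d\le n$, and $\bigoplus_i\xi_{d_i}(y_i)\to\xi_d(x)$ as $y_i\to x$. I would again aim to show these are open and closed in the Hitchin component, the base case being the classical general position of osculating flags of the rational normal curve (a Vandermonde determinant). The difficulty is that general position can degenerate under $C^0$-limits of flag curves, so one needs a sturdier intermediate notion; following Labourie I would use \emph{$3$-hyperconvexity}, $\xi_i(x)\oplus\xi_j(y)\oplus\xi_k(z)=\br^{i+j+k}$ for distinct $x,y,z$ and $i+j+k\le n$, which is open by transversality once $\rho$ is Anosov, and is preserved under limits because the dominated splitting --- via uniform eigenvalue-gap estimates along periodic orbits, together with density of periodic orbits --- prevents these triple sums from dropping rank; the full Frenet property, including the osculation limit, is then deduced from hyperconvexity and the $C^1$-regularity of $\xi_1$. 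Establishing this persistence of hyperconvexity under limits, i.e.\ ruling out a sudden collapse in the dimension of the triple sums, is the step I expect to be the main obstacle; everything else is either an explicit computation on the Veronese curve or a standard persistence argument from hyperbolic dynamics.
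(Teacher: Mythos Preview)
The paper does not prove this theorem; it is quoted from \cite{labourie1} as a black box and used as input for everything that follows. There is therefore no proof in the present paper to compare your proposal against.

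That said, your sketch does track the architecture of Labourie's original argument: the flat bundle over $T^1S$, the dominated splitting, the open--closed argument on the Hitchin component with the Veronese curve as base case, and hyperconvexity as the route to the full Frenet property. The weak point is your closedness step. You assert that along a convergent sequence in the Hitchin component ``the boundary holonomies stay purely loxodromic with controlled eigenvalue gaps --- exactly the condition cutting out the component here.'' But for a closed surface the Hitchin component is simply the connected component of the representation variety containing the $n$-Fuchsian locus; pure loxodromy and uniform eigenvalue gaps are \emph{conclusions} of Labourie's theorem (his Theorem~1.5), not defining hypotheses. As written, your closedness argument is circular: you invoke uniform gaps to get uniform domination, but the gaps are only available once the Anosov property is already in hand. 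In Labourie's proof this is exactly the delicate step, and it is handled by showing that hyperconvexity of the limit curve is itself a closed condition and that hyperconvexity together with equivariance forces the Anosov property at the limit --- which is essentially the ``main obstacle'' you correctly flag in your last sentence, but which your closedness paragraph has already presupposed away.
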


The metric on $\mathscr{F}$ arises from a choice of inner product on $\br^n$ and the associated embedding $\mathscr{F} \to \prod_{i = 1}^{n-1} \bp\br^n$. In particular, we may use the usual spherical angle metric on $\im \xi_1$. Since $\xi_2$ is H\"older, we have the immediate corollary: 

\begin{Cor}\label{cor:c1a} If $\xi : \bndry(S) \to \mathscr{F}$ is the Frenet curve associated to an $n$-Hitchin representation, then $\im(\xi_1)$ is a $C^{1+\al}$ submanifold of $\bp\br^n$.
\end{Cor}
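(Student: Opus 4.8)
The plan is to argue in two stages: first, that $\im(\xi_1)$ is a $C^1$ submanifold of $\bp\br^n$ whose tangent line at $\xi_1(x)$ is the projective line $\bp(\xi_2(x))$; and second, that this tangent line field is H\"older, which upgrades the regularity to $C^{1+\al}$. Throughout, $\xi$ is read as its first component $\xi_1$, the limit curve, as in the preceding text. By Theorem \ref{thm:frenet} the full Frenet curve $(\xi_1,\dots,\xi_{n-1})$ is H\"older, and the first Frenet property (with $l=2$) gives $\xi_1(x)\oplus\xi_1(x')=\br^2$ for $x\neq x'$, so $\xi_1$ is injective, hence an embedding of the circle $\bndry(S)$.

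For the first stage, fix $x_0\in\bndry(S)$, pass to an affine chart $\br^{n-1}\subset\bp\br^n$ by deleting a projective hyperplane that avoids a neighborhood of $\xi_1(x_0)$, and choose linear coordinates $(u,w)\in\br\times\br^{n-2}$ so that $\xi_1(x_0)$ is the origin and $\bp(\xi_2(x_0))$ is the $u$-axis. The second Frenet property, with $l=2$, $d_1=d_2=1$ and the \emph{joint} limit $(y_1,y_2)\to x_0$, says that the secant line $\bp(\xi_1(y_1)\oplus\xi_1(y_2))$ — the projective line through the two points — tends to the $u$-axis uniformly as $y_1,y_2\to x_0$. Hence, on a small enough neighborhood $U$ of $x_0$, these secant lines are uniformly transverse to the fibres of $(u,w)\mapsto u$; this forces $u\circ\xi_1$ to be injective on $U$ — so a homeomorphism onto an interval, $\bndry(S)$ being locally an arc of $\Sp^1_\infty$ — and forces the chords of $\xi_1$ over $U$ to be nearly horizontal. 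Thus near the origin $\im(\xi_1)$ is the graph $w=h(u)$ of a function $h$, and running the same estimate with $x_0$ replaced by each nearby point shows $h$ is differentiable with $h'(u)$ equal to the slope of $\bp(\xi_2)$ at the point over $u$; since that slope varies continuously, $h\in C^1$ and the tangent line field of $\im(\xi_1)$ is $x\mapsto\bp(\xi_2(x))$.

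For the second stage, the metric on $\mathscr{F}$ is induced from the embedding $\mathscr{F}\to\prod_{i=1}^{n-1}\bp\br^n$, so H\"older continuity of the Frenet curve gives that $\xi_2\co\bndry(S)\to\bp\br^n$ is $\al_2$-H\"older for some $\al_2\in(0,1]$; and on a neighborhood of the $u$-axis the map sending a line to its slope is smooth, hence locally Lipschitz. Therefore the tangent direction of $\im(\xi_1)$ is an $\al_2$-H\"older function of the $\bndry(S)$-parameter. To conclude that $\im(\xi_1)$ is a $C^{1+\al}$ submanifold one needs $h'$ to be H\"older in the graph parameter $u$, so one must transfer this along the local parametrization $u\mapsto x$; by the transversality estimate of the first stage that parametrization is bi-Lipschitz comparable to $\xi_1|_U$, so it suffices that $\xi_1$ be a bi-H\"older embedding of $\bndry(S)$ — which holds for the limit curve of a Hitchin, indeed of any Anosov, representation, following from the contraction estimates of \cite{labourie1}. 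Composing the resulting H\"older maps then shows $h'$ is $\al$-H\"older for a suitable $\al$ (a power of $\al_2$), which proves the corollary.

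I expect the only genuinely non-formal point to be this last transfer — passing from H\"older control of the tangent field in the intrinsic $\bndry(S)$-parameter to H\"older control in the graph (equivalently arc-length) parameter of the submanifold $\im(\xi_1)$, which is exactly where the bi-H\"older property of $\xi_1$ enters. Everything else is the classical implication that a Frenet curve has $C^1$ image with osculating tangent line, together with bookkeeping of H\"older exponents.
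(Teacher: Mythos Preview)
Your argument is correct and follows the same outline the paper sketches in a single sentence: the second Frenet property forces the image to be $C^1$ with tangent line $\bp(\xi_2)$, and H\"older continuity of $\xi_2$ upgrades this to $C^{1+\al}$. The paper does not address the parameter-transfer issue you flag in your final paragraph --- passing from H\"older in the $\bndry(S)$-parameter to H\"older in the graph/arc-length parameter --- so your resolution via the bi-H\"older property of $\xi_1$ supplies a genuine detail the paper's one-line proof omits (the bi-H\"older property does follow from the Anosov contraction estimates, though it is stated more explicitly in later Anosov-representation literature than in \cite{labourie1} itself).
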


For a closed surface, let $\xi_\rho$ and $\theta_\rho$ be the limit curve and osculating hyperplane associated to a Hitchin representation $\rho$, respectively.  For a connected compact surface $\Sigma$ with boundary and a Hitchin representation $\rho$, we define $\xi_\rho$ to be the restriction of $\xi_{\wh\rho}$ to $\pi_1(\Sigma)$, where $\wh\rho$ is the Hitchin double of $\rho$.


\subsection{Cross Ratios}\label{sec:ratios}

The classical cross ratio of four points $x,y,z$ and $t$ in the projective line $\br\bp^1$ is defined as
\begin{equation}\label{eq:projective}
B_\bp(x,y,z,t) = \frac{|xy|\cdot |zt|}{|xt|\cdot|zy|} \, ,
\end{equation}
where distance is measured in any affine patch containing all four points.
Let $\Sigma$ be a connected compact oriented surface with double of genus at least 2.
Using the classical cross ratio above, we build a cross ratio associated to a Hitchin representation $\rho\co \pi_1(\Sigma) \to \PSL(n,\br)$ following \cite[Definition 4.2 and Proposition 5.8]{LabourieCross}.

For a projective line $L$ contained in $\bp\br^n$, let ${\bp\br_V^n}^* = \{ Z \in {\bp\br^n}^* : V \not\subset Z\}$ and let $\eta_V : {\bp\br_V^n}^* \to \bp\br^n$ be given by $\eta_V(w) = w \cap V$. For points $p,q \in \bp\br^n$ with $V = p \oplus q$ and  $r,s \in {\bp\br_V^n}^*$, define $$\mathfrak{B}(r,p,s,q) := B_V\left(\eta_V(r), p , \eta_V(s), q\right)\, ,$$
where $B_V$ is the classical cross ratio on $V$. Note that $\mathfrak{B}$ is a smooth function on its domain.

The \textit{cross ratio associated to $\rho$}, denoted $B_\rho$, is defined for $(x,y,z,t) \in \bndry(\Sigma)^{4*}$ by
\begin{equation}\label{eq:crossratio}
B_\rho(x,y,z,t) = \mathfrak{B}\left(\theta_\rho(x), \xi_\rho(y) , \theta_\rho(z), \xi_\rho(t)\right) \,,
\end{equation}
where
\[
\bndry(\Sigma)^{4*} = \{(x,y,z,t) \in \bndry(\Sigma)\, | \, x\neq t \text{ and } y\neq z\}\,.
\]
By construction, $B_\rho$ is H\"older and invariant under the diagonal action of $\pi_1(\Sigma)$.
Given $(x,y,z,t) \in \bndry(\Sigma)^{4*}$ such that both $(x,y,z)$ and $(x,t,z)$ are positively oriented, $B_\rho$ satisfies the following properties (see \cite[Section 4.2]{LabourieCross} for the closed case and \cite[Theorem 9.1]{labourie1} for the extenstion to the general compact setting):

\begin{align}
B(x,y,z,t) &= 0 \iff x=y \text{ or } z = t \, , \label{normalization1} \\
B(x,y,z,t) &= 1 \iff x = z \text{ or } y=t \, , \label{normalization2} \\
B(x,y,z,t) &= B(x,y,z,w)B(x,w,z,t)\, . \label{cocycle2} \\
B(x,y,z,t) &= B(x,t,z,y)^{-1}\, . \label{symmetry3} 
\end{align}
Furthermore, if $(x,t,y,z)$ is cyclically ordered, then 
\begin{equation}
\label{order1}
B(x,y,z,t) > 1.
\end{equation}

The \textit{period} of a nontrivial element $\gamma$ of $\pi_1(\Sigma)$ with respect to $B_\rho$, written $\ell_{B_\rho}(\gamma)$, is
\[
\ell_{B_\rho}(\gamma) = \log |B_\rho(\gamma^+, x, \gamma^-,  \gamma x)| =  \log |B_\rho(\gamma^-, \gamma x, \gamma^+, x)| \, ,
\]
where $\gamma^+$ (resp., $\gamma^-$) is the attracting (resp., repelling) fixed point of $\gamma$ on $\bndry(\Sigma)$ and $x$ is any element of $\bndry(\Sigma)\smallsetminus\{\gamma^+, \gamma^-\}$. This definition is independent of the choice of $x$ and satisfies
$$\ell_{B_\rho}(\gamma) = \ell_\rho(\gamma)$$
for any nontrivial element $\gamma$ of $\pi_1(\Sigma)$ \cite[Proposition 5.8]{LabourieCross}.

\textit{\textbf{Remark.}} We should note that the cross ratio associated to a Hitchin representation $\rho$ as defined here is referred to as $B_{\rho^*}$ in \cite{LabourieCross} and \cite{Labourie:2009gb}, where $\rho^*(\gamma) = \rho(\gamma^{-1})^t$. The cross ratio used in \cite{LabourieCross} and \cite{Labourie:2009gb} has $B_\rho(x,y,z,t) = B_{\rho^*}(y,x,t,z)$.  Both cross ratios have all the same properties, as shown in \cite{LabourieCross}.  The choice to use this definition is a cosmetic one for the case of $\br\bp^2$-surfaces considered below.


\section{Lebesgue Measure on the Frenet Curve}

Let $S$ be a closed surface and $\Sigma\subset S$ an incompressible connected subsurface.  A complete hyperbolic structure on $S$ gives an identification of $\bndry(S)$ with $\Sp^1_\infty = \partial_\infty \bh^2$.  It is a classical result that under this identification $\bndry(\Sigma)$ is measure 0 with respect to the Lebesgue measure on $\Sp^1_\infty$ (for instance, see \cite[Theorem 2.4.4]{Nicholls:1989ij}).  The goal of this section is to show that this holds true with respect to the Lebesgue measure on the limit curve associated to a Hitchin representation.  

For the entirety of this section, if $\rho$ is a Hitchin representation of a surface with boundary, we will use $R$ to denote a good representative. Further, we will assume that $\xi_\rho=\xi_R$. 

\begin{Lem}
Let $\wh \rho$ be the Hitchin double of $\rho\co \pi_1(\Sigma) \to \PSL(n,\br)$, then $J_n$ preserves the limit curve $\xi_{\wh\rho}\subset \bp\br^n$ associated to $\wh\rho$.
\end{Lem}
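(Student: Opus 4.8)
The plan is to read off the conclusion from the two facts already recorded in \S\ref{sec:frenet} and \S\ref{doubling}: that (as a subset of $\bp\br^n$) the image of the limit curve of a closed-surface Hitchin representation is the closure of its set of attracting fixed points, and that a good representative $\wh R$ of $\wh\rho$ satisfies the doubling relation $\wh R(\bar\gamma) = J_n\cdot\wh R(\gamma)\cdot J_n$. So first I would fix a good representative $R$ of $\rho$, let $\wh R\co\pi_1(\wh\Sigma,v)\to\PSL(n,\br)$ be the corresponding good representative of $\wh\rho$, and take $\xi_{\wh\rho}=\xi_{\wh R}$. By Theorem~\ref{thm:frenet} and the discussion following it, $\im(\xi_{\wh\rho})=\overline{A}$, where
\[
A \;=\; \{\, p_\gamma : \gamma\in\pi_1(\wh\Sigma,v)\smallsetminus\{e\}\,\},
\]
and $p_\gamma\in\bp\br^n$ is the attracting fixed point of $\wh R(\gamma)$, i.e.\ the eigenline of its eigenvalue of largest modulus — a simple eigenline, since $\wh R(\gamma)$ is purely loxodromic.

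The heart of the argument is a one-line eigenvector computation. Note $J_n^2=I_n$, so $J_n$ acts on $\bp\br^n$ as an involution, in particular as a homeomorphism. Given nontrivial $\gamma$, choose $v_\gamma\in\br^n$ spanning $p_\gamma$ with $\wh R(\gamma)v_\gamma=\lambda v_\gamma$ and $|\lambda|=\lambda_{\mathrm{max}}(\wh R(\gamma))$. Then, using $J_n^2=I_n$ and the doubling relation,
\[
\wh R(\bar\gamma)\,(J_n v_\gamma) \;=\; J_n\,\wh R(\gamma)\,J_n\,J_n\,v_\gamma \;=\; J_n\,\wh R(\gamma)\,v_\gamma \;=\; \lambda\,(J_n v_\gamma).
\]
Since $\wh R(\bar\gamma)$ is conjugate to $\wh R(\gamma)$ it has the same eigenvalue of largest modulus, so $J_n v_\gamma$ spans its attracting eigenline; that is, $J_n\cdot p_\gamma = p_{\bar\gamma}$. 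Because $\iota$ fixes $v\in\partial\Sigma$ and $\iota^2=\mathrm{id}$, the map $\gamma\mapsto\bar\gamma=\iota_*(\gamma)$ is an involution of $\pi_1(\wh\Sigma,v)\smallsetminus\{e\}$, whence $J_n\cdot A = A$.

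Finally, since $J_n$ is a homeomorphism of $\bp\br^n$ and $\im(\xi_{\wh\rho})=\overline{A}$, we get $J_n\cdot\im(\xi_{\wh\rho})=\overline{J_n\cdot A}=\overline{A}=\im(\xi_{\wh\rho})$, which is the assertion. The same computation, combined with continuity of $\xi_{\wh\rho}$ and density of attracting fixed points in $\bndry(\wh\Sigma)$, yields the sharper equivariance statement $\xi_{\wh\rho}\circ\tau = J_n\cdot\xi_{\wh\rho}$, where $\tau$ is the boundary homeomorphism induced by $\iota_*$ (characterized by $\tau(\gamma^+)=\bar\gamma^+$); this is the form one actually wants in the sequel. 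There is no serious obstacle here: the entire content is the correct recollection that $\im(\xi_{\wh\rho})$ is the closure of the attracting-fixed-point set and that pure loxodromicity makes the top eigenline unambiguous, after which the $J_n$-conjugation does the rest.
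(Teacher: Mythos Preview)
Your proof is correct and follows essentially the same route as the paper: reduce to showing $J_n$ permutes the attracting fixed points of $\wh R$ via the doubling relation $\wh R(\bar\gamma)=J_n\wh R(\gamma)J_n$, then pass to the closure using density. The only difference is cosmetic---where the paper identifies $J_n\cdot\xi(\gamma^+)$ as the attracting fixed point of $\wh R(\bar\gamma)$ by iterating $\wh R(\bar\gamma)^k$ on a generic point, you do it by a one-line eigenvector computation, which is a bit cleaner.
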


\begin{proof} Let $\xi = \xi_{\wh\rho} = \xi_{\wh R}$.  Since the attracting fixed points of $\wh R$ are dense in $\xi$, we will first show that $J_n$ preserves the set of attracting fixed points.

Let $\gamma \in \pi_1(\wh\Sigma)$, then by equivariance, $\xi(\gamma^+)$ is the attracting fixed point of $\wh R(\gamma)$. It follows that $J_n \cdot\xi(\gamma^+)$ is fixed by $J_n\cdot \wh R(\gamma)\cdot J_n = \wh R(\ov\gamma)$. Recall that $\ov\gamma$ is the image of $\gamma$ under the induced map of the canonical involution of $\wh\Sigma$.  Choose $x\notin \wh R(\ov\gamma)^\perp$ such that $y = J_n\cdot x \notin \wh R (\gamma)^\perp$. Here, $\wh R(\gamma)^\perp$ is the hyperplane spanned by the eigenvectors associated to the eigenvalues of non-maximal absolute value.  We then have that 
$$\lim_{k\to\infty} \left(\wh R(\bar\gamma)^k \cdot x\right) =\xi(\bar\gamma^+)$$
and also
\begin{align*}
\lim_{k\to\infty} \left(\wh R(\bar\gamma)^k \cdot x\right) & = \lim_{k\to\infty} \left(J_n\cdot \wh R(\gamma)^k \cdot J_n \cdot x\right) \\
&= J_n \cdot \left(\lim_{k\to\infty} \wh R(\gamma)^k \cdot y\right)\\ 
&= J_n \cdot \xi(\gamma^+)\, .
\end{align*}
In particular, $\xi(\bar\gamma^+) = J_n \cdot \xi(\gamma^+) \in \xi$ is the attracting fixed point of $\wh R(\bar \gamma)$.  Now choose $z\in \xi$, then there exists a sequence $\{\gamma_j\}$ in $\pi_1(\wh\Sigma)$ such that
$$\lim_{j\to\infty} \xi(\gamma_j^+) = z \,.$$
Hence, 
$$\lim_{j\to\infty} \left(J_n\cdot \xi(\gamma_j^+) \right)= J_n\cdot z$$
and as $\xi$ is closed, we have $J_n\cdot z \in \xi$.  Therefore $J_n$ preserves $\xi$. 
\end{proof}

Let $\xi_\rho \co \bndry(S) \to \bp \br^n$ be the limit curve associated to an $n$-Hitchin representation $\rho$ of $\pi_1(S)$.  By Corollary \ref{cor:c1a}, the image of $\xi_\rho$ is a $C^{1+\al}$ submanifold, so we let $\eta_\rho\co \Sp_\infty^1 \to \im(\xi)$ be a $C^{1}$-parameterization with $\al$-H\"older derivatives. We further assume that $\eta_\rho$ is constant speed with $\|\eta_\rho'\| = c_\rho > 0$ (recall that $\bp \br^n$ carries the standard spherical metric). Let $\lambda$ be the Lebesgue measure on $\Sp_\infty^1$ and define $$\mu_\rho = (\xi_\rho^{-1} \circ \eta_\rho)_*\lambda.$$

A finite positive measure $\mu$ on $\bndry(S)$ is \textit{quasi-invariant} if, for every $\gamma \in \pi_1(S)$, the pushforward measure $\gamma_* \mu$ is absolutely continuous with respect to $\mu$. In addition, if the Radon-Nikodym derivative is H\"older, we say $\mu$ is \textit{H\"older quasi-invariant}.

\begin{Lem}\label{lem:qi}
The measure $\mu_\rho$ is H\"older quasi-invariant.
\end{Lem}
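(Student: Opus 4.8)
The plan is to produce an explicit formula for the Radon-Nikodym derivative $d\gamma_*\mu_\rho/d\mu_\rho$ and verify it is Hölder. First I would pass to the picture on the circle: since $\mu_\rho = (\xi_\rho^{-1}\circ\eta_\rho)_*\lambda$, it suffices to understand how $\pi_1(S)$ acts on $\im(\xi_\rho)\subset \bp\br^n$ through the representation $\rho$, pulled back via the constant-speed $C^{1+\al}$ parameterization $\eta_\rho$. Concretely, for $\gamma\in\pi_1(S)$ the element $\rho(\gamma)\in\PSL(n,\br)$ acts projectively on $\bp\br^n$, preserves the curve $\im(\xi_\rho)$ (by equivariance of the Frenet curve), and the quasi-invariance Radon-Nikodym derivative is the Jacobian of this action measured in the arc-length parameter supplied by $\eta_\rho$. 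So the key computation is: for a fixed $A\in\PSL(n,\br)$ preserving a $C^{1+\al}$ curve $C=\im(\eta_\rho)$ in $\bp\br^n$, the map $A|_C\co C\to C$ is $C^{1+\al}$ with nonvanishing derivative, hence the pullback of Lebesgue measure on $C$ under $A$ has $C^\al$ (in particular Hölder) Radon-Nikodym derivative with respect to Lebesgue on $C$.

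The key steps, in order: (i) Note that $\rho(\gamma)$ acts on all of $\bp\br^n$ as a projective-linear map, which is real-analytic away from the hyperplane it collapses; since $\im(\xi_\rho)$ is compact and $\xi_\rho$ is injective, $\im(\xi_\rho)$ meets that bad hyperplane in a controlled way — in fact for the Frenet curve $\rho(\gamma)$ sends $\im(\xi_\rho)$ diffeomorphically onto itself, so the restriction $\rho(\gamma)|_{\im(\xi_\rho)}$ is a $C^{1+\al}$ diffeomorphism of the curve (it is the composition of the $C^{1+\al}$ embedding, a projective-linear map smooth near the curve, and the $C^{1+\al}$ inverse chart). (ii) Transport this through $\eta_\rho$: the map $g_\gamma := \eta_\rho^{-1}\circ\rho(\gamma)\circ\eta_\rho \co \Sp^1_\infty\to\Sp^1_\infty$ is then a $C^{1+\al}$ diffeomorphism of the circle with nowhere-vanishing derivative; here one uses that $\eta_\rho$ is a constant-speed $C^1$ parameterization with $\al$-Hölder derivative and that $\eta_\rho^{-1}$ inherits $C^{1+\al}$ regularity because $\|\eta_\rho'\|=c_\rho>0$. (iii) Then $d(g_\gamma)_*\lambda/d\lambda = |(g_\gamma^{-1})'|$, which is a positive $\al$-Hölder function on $\Sp^1_\infty$; (iv) finally, identify $g_\gamma$ with the action of $\gamma$ on $\bndry(S)$ up to the identification $\xi_\rho^{-1}\circ\eta_\rho$ — that is, $\gamma$ acts on $\bndry(S)$ and $\xi_\rho$ is $\rho$-equivariant, so $(\xi_\rho^{-1}\circ\eta_\rho)$ conjugates $g_\gamma$ to the $\gamma$-action, whence $d\gamma_*\mu_\rho/d\mu_\rho = |(g_\gamma^{-1})'|\circ(\eta_\rho^{-1}\circ\xi_\rho)$ is Hölder (composition of Hölder maps is Hölder), proving the claim.

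The main obstacle I expect is step (i)/(ii): controlling the regularity of $\rho(\gamma)|_{\im(\xi_\rho)}$ near the "vanishing hyperplane" of the projective-linear map $\rho(\gamma)$, and checking that no loss of differentiability or blow-up of the derivative occurs. One must rule out that $\im(\xi_\rho)$ is tangent to, or passes through, the hyperplane $\ker$-component in a way that would make the restricted map fail to be a $C^{1+\al}$ diffeomorphism or make its derivative vanish or blow up. This is exactly where the Frenet/hyperconvexity property of $\xi_\rho$ from Theorem~\ref{thm:frenet} does the work: because $\xi_\rho$ is a Frenet curve, $\rho(\gamma)$ permutes the curve's points and acts as a genuine $C^{1+\al}$ diffeomorphism of $\im(\xi_\rho)$, so the derivative is bounded above and below on the compact curve and the Hölder constant of $(g_\gamma)'$ is finite. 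A secondary, more routine obstacle is bookkeeping the identifications $\bndry(S)\leftrightarrow\Sp^1_\infty\leftrightarrow\im(\xi_\rho)$ and the fact that finiteness and positivity of $\mu_\rho$ follow from $\lambda$ being finite and $\eta_\rho$ being a homeomorphism with $c_\rho>0$; I would dispatch these quickly and concentrate the write-up on the Jacobian/Hölder estimate.
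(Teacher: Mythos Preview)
Your proposal is correct and follows essentially the same route as the paper: both identify the Radon--Nikodym derivative as $|s_\gamma'|\circ\eta_\rho^{-1}\circ\xi_\rho$ with $s_\gamma=\eta_\rho^{-1}\circ\rho(\gamma^{-1})\circ\eta_\rho$, use the constant-speed condition $\|\eta_\rho'\|=c_\rho>0$ to see that $\eta_\rho^{-1}$ is Lipschitz, and verify that $|s_\gamma'|$ is H\"older via the chain rule applied to $\eta_\rho\circ s_\gamma = g\circ\eta_\rho$. Your anticipated obstacle in step~(i) is a phantom, however: an element of $\PSL(n,\br)$ is invertible and therefore acts as a global smooth diffeomorphism of $\bp\br^n$ with no ``vanishing hyperplane'' or ``$\ker$-component,'' so the paper simply records that $D_g$ is continuously differentiable on $\mathrm{T}(\bp\br^n)$ and no appeal to the Frenet property is needed at this point.
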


\begin{proof}  Fix $\gamma \in \pi_1(S)$ and let $A \subset \bndry(S)$ be measurable. By definition, $$\gamma_* \mu_\rho(A) = \mu_\rho(\gamma^{-1}  A) = \lam\left(\eta_\rho^{-1} \circ \xi_\rho \left(\gamma^{-1}  A\right) \right) = \lam\left(\eta_\rho^{-1} \circ \rho(\gamma^{-1}) \circ \xi_\rho \left(A\right) \right).$$
Let $g = \rho(\gamma^{-1})$ and define $s_\gamma(t) = \eta_\rho^{-1} \circ g \circ \eta_\rho (t)$ then, \begin{equation}\label{eq:qi}\gamma_* \mu_\rho(A) = \lam(s_\gamma( \eta_\rho^{-1} \circ \xi (A))) = \int_{\eta_\rho^{-1} \circ \xi (A)} |s_\gamma'| \, d \lambda = \int_A |s_\gamma'| \circ \eta_\rho^{-1} \circ \xi_\rho  \, d\mu_\rho.\end{equation}
It remains to demonstrate that $|s_\gamma'| \circ \eta_\rho^{-1} \circ \xi_\rho $ is H\"older continuous.

We compute
\begin{align*}
\left\| \frac{d}{d t} \; \eta_\rho \circ s_\gamma(t) \right\| = \left\| \frac{d}{dt} \; g \circ \eta_\rho(t) \right\| \\
\\
c_\rho \cdot |s_\gamma'(t)| = \left\| D_{g}(\eta_\rho(t)) \cdot \eta_\rho'(t) \right\|
\end{align*}

Because $D_{g}$ is continuously differentiable on $\mathrm{T}\left(\bp\br^n\right)$ (and therefore H\"older) and $\eta_\rho'(t)$ is H\"older by construction, it follows that $|s_\gamma'(t)|$ is as well. 

For $\eta_\rho^{-1}$, consider the function $F$ on $\Sp_\infty^1\times\Sp_\infty^1$ given by $F(p,p) = 1/c_\rho$ and
\[
F(p,q) =  \frac{d_{\Sp^1}(p,q)}{d_{\bp\br^n}(\eta_\rho(p),\eta_\rho(q))} \text{ for } p \neq q.
\]
Since $\|\eta_\rho'\| = c_\rho > 0$, $F$ is continuous and therefore bounded.  
This shows that $\eta_\rho^{-1}$ is Lipschitz. 
Since $\xi_\rho$ is H\"older, it follows that $|s_\gamma'| \circ \eta_\rho^{-1} \circ \xi_\rho$ is H\"older and therefore $\mu$ is a H\"older quasi-invariant measure with respect to the action of $\pi_1(S)$ on $\bndry(S)$ by (\ref{eq:qi}).
\end{proof}

From Lemma \ref{lem:qi}, the work of Ledrappier \cite[III Section e]{Ledrappier:1994uy} tells us that that $\mu_\rho$ is $\pi_1(S)$-\textit{ergodic}. In particular, if $A\subset \bndry(S)$ is a $\pi_1(S)$-invariant set, then $A$ has either null or full measure. We now apply this property to obtain a special case of Theorem \ref{thm:measure}:

\begin{Lem}\label{lem:zero}
For a compact bordered surface $\Sigma$ with a double $\wh\Sigma$ of genus at least 2, fix $\rho$ a Hitchin representation of $\pi_1(\Sigma)$ and its  Hitchin double $\hat\rho$. Then, viewing $\bndry(\Sigma) \subset \bndry(\wh\Sigma)$, $$\mu_{\hat\rho}(\bndry(\Sigma)) = 0.$$ 
\end{Lem}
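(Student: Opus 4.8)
The plan is to exploit the ergodicity of $\mu_{\hat\rho}$ established via Lemma~\ref{lem:qi} and Ledrappier's theorem. Since $\bndry(\Sigma)$ is $\pi_1(\Sigma)$-invariant but not $\pi_1(\wh\Sigma)$-invariant, it is not itself an invariant set, so we cannot apply ergodicity directly to it. Instead, first I would form the $\pi_1(\wh\Sigma)$-orbit $A = \bigcup_{\gamma \in \pi_1(\wh\Sigma)} \gamma \cdot \bndry(\Sigma)$, which is by construction $\pi_1(\wh\Sigma)$-invariant. By ergodicity, $\mu_{\hat\rho}(A) \in \{0, \mu_{\hat\rho}(\bndry(\wh\Sigma))\}$, so it suffices to show $A$ is not of full measure; then $\mu_{\hat\rho}(A) = 0$, and since $\bndry(\Sigma) \subset A$ we are done. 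Because $\mu_{\hat\rho}$ is quasi-invariant (each $\gamma$ preserves null sets), it is in fact enough to produce a single open subset of $\bndry(\wh\Sigma)$ of positive measure that meets only countably many (or better, controllably many) translates of $\bndry(\Sigma)$ in a null set.

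The key geometric input is that $\Sigma$ is incompressible, so $\partial\Sigma$ consists of essential simple closed curves, and in a hyperbolic structure on $\wh\Sigma$ the limit set $\bndry(\Sigma) \subset \Sp^1_\infty$ is a Cantor set whose complement is a dense union of open intervals $(\alpha_i^-, \alpha_i^+)$ corresponding to the boundary curves and their translates; the complement of $A$ in $\Sp^1_\infty$ contains a Cantor set of positive "size" coming from the other side of the splitting. More precisely, choose a boundary component with axis endpoints $a^\pm$; the open interval $I$ on one side is disjoint from $\bndry(\Sigma)$, and the translates $\gamma \cdot \bndry(\Sigma)$ for $\gamma \in \pi_1(\wh\Sigma)$ that meet $I$ are precisely those with $\gamma \cdot \wt\Sigma$ on the far side — these are governed by the coset structure and their limit sets are nested shrinking pieces. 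The crucial point is that $I \setminus A$ still has positive Lebesgue measure on $\Sp^1_\infty$: this is exactly the classical statement \cite[Theorem 2.4.4]{Nicholls:1989ij} that $\bndry(\Sigma)$ (hence its full orbit) has measure zero in $\Sp^1_\infty$. Then I transport this to $\mu_{\hat\rho}$ by noting that $\mu_{\hat\rho}$ and Lebesgue measure on $\Sp^1_\infty$ are in the same measure class: both are quasi-invariant and ergodic for the $\pi_1(\wh\Sigma)$-action, and any two such measures on the boundary of a cocompact Fuchsian-type action are mutually absolutely continuous (this follows from the Hopf-type argument, or from the fact that the Radon–Nikodym cocycles are cohomologous H\"older cocycles after identification via the quasisymmetric boundary map). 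Hence $\mu_{\hat\rho}(A) = 0$.

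The main obstacle I anticipate is justifying that $\mu_{\hat\rho}$ lies in the Lebesgue measure class on $\Sp^1_\infty$ — i.e., that the $C^{1+\alpha}$ limit curve does not concentrate the pulled-back Lebesgue measure on a Lebesgue-null subset of the circle. One clean way around this: rather than comparing measure classes, argue directly that $A$ is a proper $\pi_1(\wh\Sigma)$-invariant \emph{measurable} subset whose complement has nonempty interior in $\bndry(\wh\Sigma) = \Sp^1_\infty$. Since $\mu_{\hat\rho}$ is the pushforward of a constant-speed parametrization of a genuine $C^1$ curve, it assigns positive mass to every nonempty open subset of $\bndry(\wh\Sigma)$ (the parametrization $\eta_{\hat\rho}$ is a homeomorphism onto its image and $\lambda$ has full support). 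Therefore $\mu_{\hat\rho}(\bndry(\wh\Sigma) \setminus A) > 0$, so $A$ is not of full measure, and ergodicity forces $\mu_{\hat\rho}(A) = 0$, whence $\mu_{\hat\rho}(\bndry(\Sigma)) = 0$. The remaining detail is verifying $\bndry(\wh\Sigma) \setminus A$ has nonempty interior, which again reduces to the incompressibility of $\Sigma$: a boundary geodesic of $\Sigma$ bounds an open interval in $\Sp^1_\infty$ disjoint from $\bndry(\Sigma)$, and one checks that only translates by elements \emph{not} moving $\wt\Sigma$ across that geodesic can meet it, leaving an open subinterval untouched by the orbit.
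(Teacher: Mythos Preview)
Your setup matches the paper's exactly: form the $\pi_1(\wh\Sigma)$-orbit $A = \bigcup_{\gamma} \gamma \cdot \bndry(\Sigma)$ and invoke ergodicity (via Lemma~\ref{lem:qi} and Ledrappier) to reduce to showing $A$ is not of full measure. The divergence comes at precisely this step, and both of your proposed routes break down.

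Your second (preferred) route claims that $\bndry(\wh\Sigma)\setminus A$ has nonempty interior. This is false. The action of $\pi_1(\wh\Sigma)$ on $\bndry(\wh\Sigma)\cong\Sp^1_\infty$ is minimal, so the orbit of any single point of $\bndry(\Sigma)$ is already dense; hence $A$ is dense and its complement has empty interior. The picture you sketch --- that some open subinterval of $(\alpha^-,\alpha^+)$ is ``untouched by the orbit'' --- cannot hold: translates $\gamma\cdot\bndry(\Sigma)$ with $\gamma$ ranging over $\pi_1(\wh\Sigma)$ land in every open arc. Your first route (transporting the classical Lebesgue-null statement via a measure-class comparison) is also unjustified: $\xi_{\hat\rho}$ is only H\"older, and two ergodic quasi-invariant measures on $\bndry(\wh\Sigma)$ are either equivalent or mutually singular --- there is no reason to expect equivalence here, and in general $\mu_{\hat\rho}$ is \emph{not} in the Lebesgue class on $\Sp^1_\infty$.

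The idea you are missing is the doubling symmetry. The paper uses that $J_n$ preserves the limit curve (the lemma just before this one) to define an involution $\iota = \xi^{-1}\circ J_n\circ\xi$ on $\bndry(\wh\Sigma)$, and sets $U' = \iota(A)$. One checks $U'$ is again $\pi_1(\wh\Sigma)$-invariant, and since $J_n$ restricted to the limit curve is $C^1$, $\mu_{\hat\rho}(A)=0 \iff \mu_{\hat\rho}(U')=0$. The point is that $A\cap U'$ is \emph{countable}: it consists only of endpoints of lifts of $\partial\Sigma$, because any translate of $\wt\Sigma$ and any translate of $\iota(\wt\Sigma)$ either share a single boundary geodesic or have disjoint limit sets. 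Hence $A$ and $U'$ cannot both have full measure, so both are null. This symmetry argument replaces the unavailable ``open complement'' and is the essential content of the lemma.
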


\begin{proof} Fix a basepoint in $\Sigma$ and consider $\bndry(\Sigma) \subset \bndry(\wh\Sigma)$ via the natural inclusion $\pi_1(\Sigma) \to \pi_1(\wh \Sigma)$. Let $\xi= \xi_{\wh \rho} = \xi_{\wh R}$.
Define
$$U = \bigcup_{g\in \pi_1(\wh\Sigma)} g\cdot\bndry(\Sigma)\, .$$
As $\mu_\rho$ is ergodic, either $\mu_\rho(U) = 0$ or $U$ has full measure.
Let $\iota$ be the involution on $\bndry ( \wh \Sigma )$ defined by $\xi^{-1}\circ J_n \circ \xi$. Then,
$$U' = \iota(U)$$
is another $\pi_1(S)$-invariant set implying it either has null or full measure.  Moreover, since $J_n|_{\im(\xi_\rho)}$ is $C^1$,
$$\mu_\rho(U) = 0 \iff \mu_\rho(U') = 0 \,.$$
Notice that both $\mu_\rho(U)$  and $\mu_\rho(U')$ cannot be full measure, as $U\cap U'$ consists of the attracting and repelling fixed points of primitive peripheral elements and must be countable. Therefore, $ 0 = \mu_\rho(U) \geq \mu_{\hat\rho}(\bndry(\Sigma))$\end{proof}
This measure property for the Hitchin double will be enough to prove the general case where $\Sigma \subset S$ is an incompressible surface. For this, we make use of the Hausdorff measure in $\br^{n-1}$.
\begin{Lem}[Theorem 3.2.3 \cite{Federer:1969iq}]\label{thm:haus} Let $f\co\br^m \to \br^n$ be a Lipschitz function for $m \leq n$. If $A$ is an $\lam^m$ (Lebesgue) measurable set, then 
$$\int_A \text{Jac}_m(f(x)) \, d \lam^m x = \int_{\br^n} N(f \mid A, y) \, d \mathcal{H}^m y$$
where $\mathcal{H}^m$ is the $m$-dimensional Hausdorff measure, $N(f \mid A, y)  = \# \{ x \in A \mid f(x)= y\}$, and $\text{Jac}_m(f(x)) = \sqrt{\det (Df^{t} \cdot Df)}(x)$.
\end{Lem}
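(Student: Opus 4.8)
This is the \emph{area formula} in the form stated by Federer, and for the purposes of this paper it suffices to invoke it as a black box; nevertheless, here is the shape of a proof. The plan is to reduce to the linear case and patch together with a covering argument. First, by Rademacher's theorem the Lipschitz map $f$ is differentiable at $\lam^m$-almost every point, so $\mathrm{Jac}_m(f(x)) = \sqrt{\det(Df^t Df)(x)}$ is defined $\lam^m$-a.e.\ and is bounded (by a power of the Lipschitz constant of $f$). Split the domain as $A = Z \sqcup A_0$, where $Z$ is the set of points of $A$ at which $f$ fails to be differentiable or at which $Df$ has rank strictly less than $m$, and $A_0 = A \setminus Z$. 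On $Z$ the integrand on the left vanishes a.e., so the left-hand side only sees $A_0$.

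On $A_0$ one decomposes $A_0 = \bigsqcup_{k} E_k$ into countably many Borel sets on which $f$ is injective and $Df$ is uniformly close to a fixed invertible linear map $T_k$; this is a standard measurable-selection argument built on the a.e.\ differentiability of $f$ together with the local approximation of $f$ by its differential. For an injective linear map $L \co \br^m \to \br^n$ one has the linear-algebra identity $\mathcal{H}^m(L(S)) = \sqrt{\det(L^t L)}\,\lam^m(S)$ (via the singular value decomposition of $L$); feeding this the approximation $f \approx Df(x_0)$ on small balls and running a Vitali covering argument on $E_k$ gives $\int_{E_k} \mathrm{Jac}_m(f)\, d\lam^m = \mathcal{H}^m(f(E_k))$. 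Since the $E_k$ are disjoint and $f|_{E_k}$ is injective, $N(f\mid A_0, y) = \sum_k \chi_{f(E_k)}(y)$ for $\mathcal{H}^m$-a.e.\ $y$, and summing over $k$ yields the identity with $A_0$ in place of $A$.

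It remains to see that $Z$ contributes nothing to the right-hand side, i.e.\ that $\mathcal{H}^m(f(Z)) = 0$; combined with the previous step this finishes the proof. The part of $Z$ where $f$ is not differentiable has $\lam^m$-measure zero, and a Lipschitz map sends $\lam^m$-null sets to $\mathcal{H}^m$-null sets. The part where $Df$ has rank $< m$ is handled by an Eilenberg-type estimate, the Lipschitz analogue of Sard's theorem: cover this set by small cubes on which $f$ differs from its (rank-deficient) differential by at most $\ve$ times the diameter, so that $f$ of each cube lies in an $\ve\cdot(\mathrm{diam})$-neighbourhood of an affine $(m-1)$-plane and hence has $\mathcal{H}^m$-content at most $C\ve$ times the cube's volume; summing and letting $\ve \to 0$ gives $\mathcal{H}^m(f(Z)) = 0$. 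This last estimate — controlling the image of the critical set — is the one genuinely delicate point; the measurable decomposition into injective pieces is technical but routine once Rademacher's theorem and the affine approximation of $f$ are in hand.
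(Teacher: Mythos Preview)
The paper does not prove this lemma at all; it is simply quoted as Theorem~3.2.3 of Federer's \emph{Geometric Measure Theory} and used as a black box in the proof of Theorem~\ref{thm:measure}. Your opening sentence already acknowledges this, and the sketch you give afterward is the standard outline of Federer's argument (Rademacher, linearization on a countable measurable partition into pieces where $f$ is bi-Lipschitz close to a fixed linear map, the linear area formula via singular values, and a Sard-type estimate for the critical set), so there is no discrepancy to flag.
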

\thmmeasure*
\begin{proof} By fixing a basepoint on $\Sigma$, there are natural inclusions $i\co\pi_1(\Sigma) \to \pi_1(S)$ and $\hat{\imath} \co \pi_1(\Sigma) \to \pi_1(\wh \Sigma)$ and the induced inclusions $i_*, \hat{\imath}_*$ on the boundaries at infinity. Let $R$ be a representative of $\rho$ such that $R \circ i$ is a good representative for $\left.\rho\right|_{\pi_1(\Sigma)}$ and build $\hat R$ by doubling $R \circ i$. 

Let $\xi = \xi_R$ and $\hat \xi = \xi_{\wh R}$ be the limit curves associated to $\rho$ and $\hat \rho$, respectively. For $\gamma\in \pi_1(\Sigma)$, 
$$\xi \circ i_*(\gamma^+) = R(i(\gamma))^+  = \wh R(\hat{\imath}(\gamma))^+ = \hat\xi \circ \hat{\imath}_*(\gamma^+)$$
as $R(i(\gamma)) = \wh R(\hat{\imath}(\gamma))$. By the density of attracting fixed points in $\bndry(\Sigma)$ we see that $\xi \circ i_* = \hat\xi \circ \hat{\imath}_*$. In particular, they have the same image 
\[
\Lambda_\Sigma = \xi \circ i_*(\bndry(\Sigma)) =   \hat\xi \circ \hat{\imath}_*(\bndry(\Sigma)).
\]

Let $\eta_\rho, \eta_{\hat\rho} \co \Sp_\infty^1 \to \bp\br^n$ be the two $C^{1+\al}$ constant speed parametrizations of $\im(\xi)$ and $\im(\hat\xi)$.
Fix a cover of $\bp\br^n$ by affine charts $\{U_1, \ldots, U_k\}$ and for $i \in \{1, \ldots, k\}$ let $\mathfrak{p}_i \co U_i \to \br^{n-1}$ be the coordinate functions.
Applying Lemma \ref{thm:haus} to $\mathfrak{p}_i \circ \eta_\rho$ we obtain: 

\[
\mu_{\rho}(i_*(\bndry(\Sigma))) \leq \sum_{i=1}^k\int_{\eta_{\rho}^{-1}(U_i\cap\Lambda_\Sigma)} \text{Jac}_1(\mathfrak{p}_i \circ \eta_\rho) \; d \lam = \sum_{i=1}^k \mathcal{H}^1(\mathfrak{p}_i(U_i\cap\Lambda_\Sigma)).
\]

An application of Lemma \ref{thm:haus} and Lemma  \ref{lem:zero} to $\mathfrak{p}_i \circ \eta_{\hat{\rho}}$ yields:
\[
\mathcal{H}^1(U_i\cap\mathfrak{p}_i (\Lambda_\Sigma)) = \int_{\eta_{\hat\rho}^{-1}(U_i\cap\Lambda_\Sigma)} \text{Jac}_1(\mathfrak{p}_i \circ \eta_{\hat\rho}) \; d \lam \leq \mu_{\hat{\rho}}(\hat{\imath}_*(\bndry(\Sigma))) = 0.
\]
Therefore, $\mu_{\rho}(i_*(\bndry(\Sigma)))  = 0$ as desired.
\end{proof}


\section{Orthogeodesics and Double Cosets}\label{sec:cosets}

Let $\Sigma$ be a connected compact orientable surface with genus $g$ and $m>0$ boundary components such that the double of $\Sigma$ has genus at least 2.  Fix a finite volume hyperbolic metric $\sigma$ on $\Sigma$ such that $\partial \Sigma$ is totally geodesic.   In particular, we can fix an identification of the universal cover $U$ of $\Sigma$ with a convex subset of $\bh^2$ cutout by geodesics.  This also gives an identification of $\pi_1(\Sigma)$ with a discrete subgroup of $\mathrm{Isom}^+(\bh^2)$.  

An \emph{orthogeodesic} in $(\Sigma, \sigma)$ is an oriented properly embedded arc perpendicular to $\partial\Sigma$ at both endpoints.  Denote the collection of orthogeodesics as $\cO(\Sigma,\sigma)$.   The \emph{orthospectrum} is the multiset containing the lengths of orthogeodesics with multiplicity and is denoted by $|\cO(\Sigma,\sigma)|$.  Observe that every element of  $|\cO(\Sigma,\sigma)|$ appears at least twice as orthogeodesics are oriented.  Also note that $\cO(\Sigma,\sigma)$ is countable as the orthogeodesics  correspond to a subset of the oriented closed geodesics in the double of $(\Sigma, \sigma)$.  Let $\ell_\sigma(\partial \Sigma)$ be the length of $\partial \Sigma$ in $(\Sigma,\sigma)$, then recall Basmajian's identity \cite{Basmajian:1993eu}
$$\ell_\sigma(\partial \Sigma) = \sum_{\ell \in |\cO(\Sigma,\sigma)|} 2 \log \coth \left(\frac \ell2 \right) \,.$$

In order to extend this identity to the setting of Hitchin representations, we first need to replace the geometric object $\cO(\Sigma,\sigma)$ with an algebraic object; this is the goal of this section.

Let $\cA=\{\al_1, \ldots, \al_m\}\subset \pi_1(\Sigma)$ be a collection of primitive elements representing  the $m$ components of $\partial \Sigma$ in $\pi_1(\Sigma)$ oriented such that the surface is to the left. We will call such a set $\cA$ a \textit{positive peripheral marking}. Set $H_i = \la \al_i \ra$ and, treating $\pi_1(\Sigma)$ as a subgroup of $\PSL(2,\R)$, let $\wt \al_i\subset \bh^2$ be the lift of $\al_i$ such that $H_i = \mathrm{Stab}(\wt\al_i)$. 

Fix $1\leq i, j\leq n$ (possibly $i=j$), then for $g\in \pi_1(\Sigma)$ (or $g\in \pi_1(\Sigma) \smallsetminus H_i$ if $i=j$) define the arc $\wt\al_{i,j}(g)$ to be the minimal length arc oriented from $\wt\al_i$ to $g\cdot \wt \al_j$.  Now $\wt \al_{i,j}(g)$ descends to an orthogeodesic $\al_{i,j}(g)$ on $(\Sigma, \sigma)$.  For $i \neq j$, we denote the set of double cosets
$$\cO_{i,j}(\Sigma, \cA) = H_i\bs \pi_1(\Sigma) / H_j = \{H_i g H_j \colon g\in \pi_1(\Sigma)\}$$
and for $i = j$, define
$$\cO_{i,i}(\Sigma, \cA) = \left(H_i\bs \pi_1(\Sigma) / H_i\right) \smallsetminus \{H_i e H_i\}\,,$$
where $e\in \pi_1(\Sigma)$ is the identity. We will denote an element of $\cO_{i,j}(\Sigma, \cA)$ corresponding to $H_i g H_j$ as $[g]_{i,j}$. Associated to the pair $(\Sigma,\cA)$ we define the \emph{orthoset} to be the collection of all such cosets
$$\cO(\Sigma, \cA) = \bigsqcup_{1\leq i,j \leq n} \cO_{i,j}(\Sigma, \cA) \, .$$

From the definitions, it is clear that the map 
$$\Phi: \cO(\Sigma, \cA) \to \cO(\Sigma,\sigma)$$
given by 
$$\Phi([g]_{i,j}) = \al_{i,j}(g)$$
is well-defined.  

\begin{Prop}\label{lem:bijection}
The map $\Phi$ is a bijection.
\end{Prop}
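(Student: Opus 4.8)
The plan is to construct an explicit inverse to $\Phi$ using the universal cover, thereby establishing both surjectivity and injectivity at once. The key geometric fact is that an orthogeodesic in $(\Sigma,\sigma)$ lifts to a geodesic arc in the convex set $U \subset \bh^2$ that realizes the distance between two boundary geodesics of $U$, and any two boundary geodesics of $U$ are of the form $\wt\al_i$ and $h \cdot \wt\al_j$ for suitable $i,j$ and some $h \in \pi_1(\Sigma)$.

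First I would establish surjectivity. Given an orthogeodesic $\delta \in \cO(\Sigma,\sigma)$, lift it to an arc $\wt\delta$ in $U$ perpendicular to two (not necessarily distinct) boundary geodesics $L_0$ and $L_1$ of $U$, with $\wt\delta$ oriented from $L_0$ to $L_1$. Since $\partial \Sigma$ has $m$ components, each boundary geodesic of $U$ is in the $\pi_1(\Sigma)$-orbit of exactly one $\wt\al_i$; so $L_0 = g_0 \cdot \wt\al_i$ and $L_1 = g_1 \cdot \wt\al_j$ for some $g_0, g_1 \in \pi_1(\Sigma)$ and indices $i,j$. Replacing $\wt\delta$ by $g_0^{-1}\cdot\wt\delta$ (which changes neither the descended orthogeodesic nor the double coset we are about to name), we may assume $L_0 = \wt\al_i$, and then $L_1 = g \cdot \wt\al_j$ with $g = g_0^{-1} g_1$. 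Because $\wt\delta$ is the minimal-length arc between $L_0$ and $L_1$ — this is where I use that the feet of an orthogeodesic are the nearest-point projections, so the perpendicular common arc is unique and length-minimizing among all arcs joining the two geodesics — we get $\wt\delta = \wt\al_{i,j}(g)$, hence $\Phi([g]_{i,j}) = \delta$. One must check the case $L_0 = L_1$ (i.e. $i=j$ and $g \in H_i$) is excluded: if the two geodesics coincide there is no orthogeodesic between them, which is exactly why $\cO_{i,i}$ omits $H_i e H_i$.

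Next I would establish injectivity, or equivalently well-definedness of the inverse. Suppose $\Phi([g]_{i,j}) = \Phi([g']_{i',j'})$. Lifting the common orthogeodesic and using the $\pi_1(\Sigma)$-action, there is $h \in \pi_1(\Sigma)$ carrying the ordered pair of geodesics $(\wt\al_i, g\cdot\wt\al_j)$ to $(\wt\al_{i'}, g'\cdot\wt\al_{j'})$. Since the orbit of each $\wt\al_k$ under $\pi_1(\Sigma)$ meets $\{\wt\al_1,\dots,\wt\al_m\}$ in exactly one point and $H_k = \mathrm{Stab}(\wt\al_k)$, from $h\cdot\wt\al_i = \wt\al_{i'}$ we deduce $i = i'$ and $h \in H_i$; similarly from $h g \cdot \wt\al_j = g' \cdot \wt\al_{j'}$ we get $j = j'$ and $g'^{-1} h g \in H_j$, i.e. $g' \in h g H_j \subset H_i g H_j$, so $[g]_{i,j} = [g']_{i',j'}$. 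Conversely the same computation shows that distinct double cosets give rise to geodesic-pairs in distinct $\pi_1(\Sigma)$-orbits, hence distinct orthogeodesics, which re-confirms well-definedness of $\Phi$ itself.

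The main obstacle I anticipate is purely bookkeeping rather than conceptual: one must be careful that the boundary geodesics of the convex core $U$ are in bijection with $\bigsqcup_i \pi_1(\Sigma)/H_i$ (equivalently, that $\pi_1(\Sigma)$ acts on them with stabilizers exactly the $H_i$ and with $m$ orbits), and that "the minimal-length arc from $\wt\al_i$ to $g\cdot\wt\al_j$" is genuinely unique and genuinely descends to a properly embedded perpendicular arc — in particular that it is embedded in $\Sigma$, which follows because a common perpendicular between two disjoint geodesics in $\bh^2$ projects to an arc that cannot self-intersect without producing a shorter arc, contradicting minimality. Handling the orientations consistently (the arc is oriented from $\wt\al_i$ to $g\cdot\wt\al_j$, matching the orientation convention on $\cO(\Sigma,\sigma)$) and the degenerate $i=j$ case are the only places requiring genuine care.
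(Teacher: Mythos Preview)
Your argument is correct and follows essentially the same route as the paper: lift to the convex core $U\subset\bh^2$, identify boundary geodesics with cosets $\pi_1(\Sigma)/H_j$, and use that the deck group acts on lifts of a given orthogeodesic transitively with the expected stabilizers; the paper's write-up differs only cosmetically (it chooses the lift to start on $\wt\al_i$ directly rather than translating, and reads off $i=i'$, $j=j'$ from the orientation before invoking stabilizers). One small correction to your closing remarks: orthogeodesics need not be simple in $\Sigma$---the arc $\wt\al_{i,j}(g)$ is embedded in $U$ but its projection can self-intersect---so your ``shorter arc'' argument is both incorrect and unnecessary; what matters is only that the projection is a \emph{proper} geodesic arc perpendicular to $\partial\Sigma$, which is immediate.
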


\begin{proof}
We first show it is injective.  Suppose $\Phi([g]_{i,j}) = \Phi([g']_{i',j'})$.  First note that $i'=i$ and $j=j'$ since the arcs must be oriented from $\al_i$ to $\al_j$.  Now $\wt \al_{i,j}(g)$ and $\wt \al_{i,j}(g')$ must differ by an element of $\pi_1(\Sigma)$.  Since both these arcs start on $\wt\al_i$ it is clear that there exists $h_i \in H_i$ such that 
$$\wt \al_{i,j}(g) = h_i \cdot \wt\al_{i,j}(g')\,.$$
In particular, we must have that $g \cdot \wt\al_j = (h_i g')\cdot \wt\al_j$ implying 
$$ (g')^{-1} h_i^{-1} g \in H_j \,.$$
Set $h_j = (g')^{-1}h_i^{-1} g \in H_j$, then 
$$g = h_i g' h_j \in H_i g' H_j \,,$$
so that $[g]_{i,j} = [g']_{i,j}$ and $\Phi$ is injective.  

To see that $\Phi$ is surjective, take an orthogeodesic $\be \in \cO(\Sigma, \sigma)$ from $\al_i$ to $\al_j$.  Choose a lift $\wt\be$ of $\be$ such that $\wt\be$ starts on $\wt\al_i$.  But $\wt\be$ must also end on some lift of $\al_j$ which we can write as $g\cdot \wt\al_j$, so that $\Phi([g]_{i,j}) = \be$ and $\Phi$ is surjective. Notice that if $i = j$, then $g \notin H_i e H_i$ as $\be$ is a non-trivial orthogeodesic.
\end{proof}

We will see how to rewrite Basmajian's identity in terms of the orthoset as a corollary of generalizing the identity to real projective structures.  
\begin{Rem}(1) In his paper, Basmajian \cite{Basmajian:1993eu} uses the fact that an orthogeodesic can be obtained from $g\in \pi_1(\Sigma)$; in our notation, he constructs $\al_{i,i}(g)$ for a fixed $i$.\\
(2) Despite using the language and setting of surfaces, the above discussion holds just as well for connected compact hyperbolic $n$-manifolds with totally geodesic boundary.
\end{Rem}


\section{Real Projective Structures $(n = 3)$}\label{sec:projective}

A convex real projective surface, or convex $\rp^2$-surface, is a quotient $\Omega/ \Gamma$ where $\Omega\subset \rp^2$ is a convex domain in the complement of some $\rp^1$ and $\Gamma < \mathrm{PGL}(3,\br)$ is a discrete group acting properly on $\Omega$.  A convex $\rp^2$-structure on a surface $S$ is a diffeomorphism $f: S \to \Omega/ \Gamma$.
When $S$ has negative Euler characteristic, the work of Goldman \cite{Goldman:1990vp} tells us that the conjugacy class of the holonomy coming from a convex $\rp^2$-structure on a surface $S$ is a Hitchin representation  $\pi_1(S) \to \mathrm{PSL}(3,\br)$.  In fact, for closed surfaces this identification is a bijection by the work of Choi-Goldman \cite{Choi:1993iz}.

In this section we give a generalization of Basmajian's identity to convex $\rp^2$-surfaces and by extension to 3-Hitchin representations.  This result is an immediate corollary of Theorem \ref{thm:identity}, however, the proof here is geometric in nature and will closely follow Basmajian's original proof in \cite{Basmajian:1993eu}.  Further, it motivates the general case.

\subsection{Hilbert metric}

Let $F = \Omega/ \Gamma$ be a convex $\rp^2$-surface with negative Euler characteristic, then $F$ carries a natural Finsler metric called the Hilbert metric, which we now describe.

Let $x,y \in \Omega$ and define $L\subset \rp^2$ to be the projective line connecting $x$ and $y$.  $L$ intersects $\partial \Omega$ in two points $p,q$ such that $p,x,y,q$ is cyclically ordered on $L$.  Choose any affine patch containing these four points, then the Hilbert distance between $x$ and $y$ is 
$$h(x,y) := \log B_\bp(p,y,q,x) \, ,$$
where $B_\bp$ is the projective cross-ratio defined in \eqref{eq:projective}.  The geodesics in the Hilbert geometry correspond to the intersection of projective lines with $\Omega$.  As the cross-ratio is invariant under projective transformations we see that the Hilbert metric descends to a metric on $F$.

Let $\rho$ be the holonomy associated to a convex $\rp^2$-structure on a surface $S$, then for a primitive element $g\in \pi_1(\Sigma),$ the length $\ell_\rho(g)$ (see \eqref{eq:length}) agrees with the translation length of the geodesic representative of $\rho(g)$ in the Hilbert metric.  

Note that when $\Omega$ is a conic, it is projectively equivalent to a disk. In this case, $F$ is hyperbolic and $h = 2d_\bh$ where $d_\bh$ denotes the hyperbolic metric.
For more details on Hilbert geometry see \cite{Busemann:2012vp}.

\begin{figure}[t]
\begin{center}\begin{overpic}[scale=0.7]{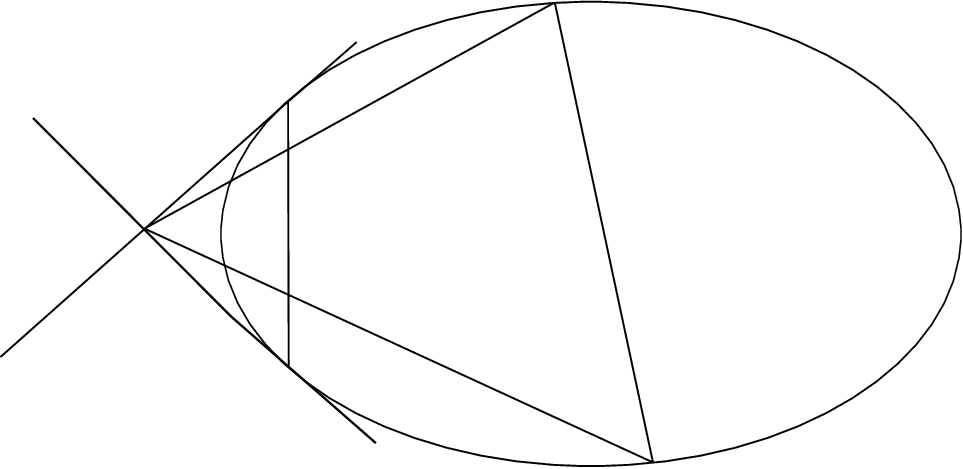}
\put(30,25){$\left.\rule{0pt}{.95cm}\right\}$}
\put(33.3,24.5){${\wt U^g_{i,j}}$}
\put(64.2,25){$g\cdot L_j$}
\put(25.5,24.5){${L_i}$}
\put(13,20){$\al_i^0$}
\put(27,7){$\al_i^+$}
\put(27,40){$\al_i^-$}
\end{overpic}
\end{center}
\caption{Orthogonal projection of $g\cdot L_j$ onto $L_i$ whose image we defined as $\wt U^g_{i,j}$.}
\label{fig:projection}
\end{figure}

\subsection{Basmajian's identity}
Let $F$ be a connected compact orientable convex $\rp^2$-surface with non-empty totally geodesic boundary whose double is at least genus 2. 
Using the doubling construction described in \S\ref{doubling} for Hitchin representations, let $\wh F = \Omega/ \Gamma$ be the double of $F$. Then $\wh F$ is a closed convex $\rp^2$-surface. Note that there is also a doubling construction in \cite{Goldman:1990vp} inherent to convex $\br\bp^2$-surfaces, which is essentially a more geometric version of the Hitchin doubling that we have already discussed.

Choose a positive peripheral marking $\cA = \{\al_i\}_{i=1}^m$.  
Let $\wt F \subset \Omega$ be the universal cover of $F$ and let $L_i$ be the geodesic in $\Omega$ stabilized by $\al_i \in \Gamma$.  
In projective geometry, orthogonal projection to $L_i$ is defined as follows: as the boundary of $\Omega$ is $C^1$ \cite{Goldman:1990vp}, for $x \in \partial \Omega$ let $\theta(x)$ denote the line tangent to $\partial \Omega$ at $x$.  Set $\al^0_i = \theta(\al_i^+) \cap \theta(\al_i^-)$, then the projection to $L_i$ is defined to be $\eta_i: \overline \Omega \to L_i$ where $\eta_i(y)$ is the intersection of the line connecting $\al^0_i$ and $y$ and the line $L_i$. 
For $[g]_{i,j} \in \cO(F, \cA)$, we let 
$$\wt U_{i,j}^g = \eta_i(g\cdot L_j)$$
be the orthogonal projection of $g\cdot L_j$ onto $L_i$.  This is shown in Figure \ref{fig:projection}.  

\begin{Lem}\label{lem:injective}
Let $\pi: \Omega \to \wh F$ be the universal covering map, then $\pi|\wt U_{i,j}^g$ is injective.
\end{Lem}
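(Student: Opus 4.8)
The plan is to show that if two points of $\wt U_{i,j}^g$ map to the same point of $\wh F$, they must in fact be equal. Suppose $x, y \in \wt U_{i,j}^g = \eta_i(g\cdot L_j)$ satisfy $\pi(x) = \pi(y)$, so $y = h\cdot x$ for some $h \in \Gamma = \pi_1(\wh F)$. Since $x, y$ both lie on the geodesic $L_i$, the element $h$ carries $L_i$ to a geodesic passing through $y \in L_i$. The first thing to establish is that such an $h$ must actually stabilize $L_i$: this is where I would use that $L_i$ is a \emph{complete} geodesic in the strictly convex domain $\Omega$ with well-defined endpoints $\al_i^\pm \in \partial\Omega$, together with discreteness of $\Gamma$ and the fact that distinct complete geodesics in a strictly convex Hilbert geometry meet in at most one point. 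Concretely, $h\cdot L_i$ and $L_i$ share the point $y$; if $h \notin \mathrm{Stab}(L_i) = H_i$ one gets two distinct complete geodesics through a common point, and iterating $h$ (or arguing via the limit set / convergence dynamics on $\partial\Omega$) contradicts proper discontinuity. Hence $h \in H_i$.

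Once $h \in H_i$, I would transfer the problem to the \emph{domain} of the projection rather than its image. The projection $\eta_i$ is $H_i$-equivariant — indeed $\al_i$ fixes $\al_i^0 = \theta(\al_i^+)\cap\theta(\al_i^-)$ and preserves $L_i$, so $\eta_i \circ \al_i = \al_i \circ \eta_i$, and the same holds for all powers. Therefore $h \cdot \wt U_{i,j}^g = h\cdot \eta_i(g\cdot L_j) = \eta_i(hg\cdot L_j)$. For $y = h\cdot x$ to lie in $\wt U_{i,j}^g$ we would need the two arcs $g\cdot L_j$ and $hg\cdot L_j$ to have overlapping orthogonal shadows on $L_i$. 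But distinct translates of $L_j$ are disjoint complete geodesics (again strict convexity plus discreteness), and the key geometric point is that the orthogonal projection $\eta_i$ restricted to $g\cdot L_j$ is injective with image a \emph{proper subinterval} of $L_i$ whose closure misses the endpoints $\al_i^\pm$; moreover the shadows of two disjoint translates $g\cdot L_j$ and $hg \cdot L_j$ (with $h\neq e$) are disjoint. This forces $h = e$, hence $x = y$.

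The step I expect to be the main obstacle is the disjointness-of-shadows claim: showing that for $h \in H_i \setminus\{e\}$ the intervals $\eta_i(g\cdot L_j)$ and $\eta_i(hg\cdot L_j)$ on $L_i$ are disjoint. In the hyperbolic case this is the familiar fact that the feet of perpendiculars from disjoint geodesics to a fixed geodesic are nested/disjoint, and it follows from the convexity of the distance function to $L_i$. In the Hilbert setting one does not have a genuine metric projection with the same convexity, but one does have the following substitute: the fibers of $\eta_i$ are the projective line segments through the fixed point $\al_i^0$, these foliate $\Omega$, each such line meets the strictly convex curve $g\cdot L_j$ in at most two points, and the two "sides" of $g\cdot L_j$ relative to the pencil of lines through $\al_i^0$ are controlled by where $g\cdot L_j$ sits relative to $L_i$. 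I would make this precise by using the cyclic order on $\partial\Omega$: the endpoints of $g\cdot L_j$ and of $hg\cdot L_j$ are linked with $\al_i^\pm$ in a way dictated by the double-coset structure, and projection from $\al_i^0$ is order-preserving on the relevant arcs. This reduces the claim to a statement about the cyclic ordering of points on $\partial\Omega \cong \Sp^1$, which is exactly the kind of combinatorial input that generalizes cleanly to the $n$-Hitchin setting via the Frenet curve and which underlies the full-measure decomposition needed for Theorem~\ref{thm:identity}.
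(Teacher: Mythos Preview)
Your argument for $h \in H_i = \mathrm{Stab}(L_i)$ has a genuine gap. Two distinct complete geodesics through a common point is an entirely ordinary configuration and does not by itself contradict proper discontinuity; the iteration you allude to produces nothing that conflicts with discreteness of $\Gamma$. The actual reason is much simpler: $\al_i$ is a \emph{simple} closed curve in $\wh F$. Both $L_i$ and $h\cdot L_i$ cover $\al_i$, so if they were distinct they would meet transversally at $y$, projecting to a transverse self-intersection of $\al_i$. (Equivalently: since $x,y\in L_i\subset\wt F$ and $\wt F\to F$ is the universal cover of $F$ with deck group $\pi_1(F)$, one already has $h\in\pi_1(F)$; then $h\cdot L_i$ is a boundary geodesic of the convex region $\wt F$, and distinct boundary geodesics of $\wt F$ are disjoint, forcing $h\cdot L_i=L_i$.) Once $h\in H_i$, the fact that $\wt U_{i,j}^g$ is a connected interval on $L_i$ and $\al_i$ acts on $L_i$ by translation immediately yields $(\al_i\cdot\wt U_{i,j}^g)\cap\wt U_{i,j}^g\neq\emptyset$, which is exactly the first sentence of the paper's proof.

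Your second step---$H_i$-equivariance of $\eta_i$ followed by disjointness of shadows---is correct and is precisely the paper's argument, stated there as the contrapositive: overlapping shadows would force $(\al_i g)\cdot L_j$ and $g\cdot L_j$ to intersect in $\Omega$, contradicting that $\partial F$ is totally geodesic. You rightly note that ``disjoint geodesics'' alone only gives ``disjoint or nested'' shadows; the nested case is excluded here because both $g\cdot L_j$ and $(\al_i g)\cdot L_j$ are boundary geodesics of the convex set $\wt F$, so neither separates the other from $L_i$, and hence their endpoint pairs lie in disjoint subarcs of the boundary arc cut off by $\al_i^\pm$. Your cyclic-order sketch is the right mechanism, but anchor it in this convexity of $\wt F$ rather than in any metric convexity of the Hilbert distance, which, as you observe, is not available.
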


\begin{proof}
Suppose that $\pi|\wt U_{i,j}^g$ were not injective, then $(\al_i \cdot U_{i,j}^g )\cap U_{i,j}^g \neq 0$.  This can only happen if $(\al_ig)\cdot L_j$ and $g\cdot L_j$ intersect in $\Omega$, which is impossible as the boundary is totally geodesic. 
\end{proof}

By Lemma \ref{lem:injective}, we may define $U_{i,j}^g = \pi(\wt U_{i,j}^g)$.

\begin{Lem}
If  $[g]_{i,j}, [h]_{r,s} \in \cO(F, \cA)$ are distinct elements, then $U_{i,j}^g \cap U_{r,s}^h = \emptyset$.
\end{Lem}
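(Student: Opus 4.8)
The plan is to show that the projections $\wt U_{i,j}^g$ and $\wt U_{r,s}^h$ are disjoint after applying deck transformations, which is the statement that $U_{i,j}^g \cap U_{r,s}^h = \emptyset$ downstairs. Concretely, $U_{i,j}^g \cap U_{r,s}^h \neq \emptyset$ would mean there exists $\gamma \in \Gamma = \pi_1(\wh F)$ with $\gamma \cdot \wt U_{i,j}^g \cap \wt U_{r,s}^h \neq \emptyset$. Since $\wt U_{i,j}^g \subset L_i$ and $\wt U_{r,s}^h \subset L_r$, this forces $\gamma \cdot L_i$ and $L_r$ to intersect (in $\overline\Omega$). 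Because the $L_k$ are the axes of the peripheral elements $\al_k$ and the corresponding boundary components are embedded and disjoint, the only way two such lifts can meet is if $\gamma \cdot L_i = L_r$; in particular $i = r$ (the boundary components must be the same) and $\gamma \in H_r \gamma_0$ for the specific $\gamma_0$ carrying $L_i$ to $L_r$, which we may take to be $\gamma_0 = e$ and $i = r$ after absorbing — so $\gamma \in H_i$.

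Next I would exploit the precise geometry of the projection $\eta_i$. Since $\gamma = \al_i^k$ stabilizes $L_i$ and commutes with $\eta_i$ in the sense that $\eta_i(\al_i^k \cdot y) = \al_i^k \cdot \eta_i(y)$ (the point $\al_i^0$ is the intersection of the two tangent lines at $\al_i^\pm$, both of which are $\al_i$-invariant, so $\al_i^0$ is fixed by $\al_i$, hence $\eta_i$ is $H_i$-equivariant), the condition $\al_i^k \cdot \wt U_{i,j}^g \cap \wt U_{i,s}^h \neq \emptyset$ becomes $\eta_i(\al_i^k g \cdot L_j) \cap \eta_i(h \cdot L_s) \neq \emptyset$. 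Now $\eta_i$ restricted to the region of $\overline\Omega$ on one side of $L_i$ is a homeomorphism onto its image in $L_i$ (projection along lines through the fixed point $\al_i^0$, using strict convexity of $\Omega$), so this intersection being nonempty forces $\al_i^k g \cdot L_j$ and $h \cdot L_s$ to either coincide or to be "nested" in a way that their projections overlap. I would argue that overlapping projections of two disjoint geodesics $\al_i^k g \cdot L_j$ and $h \cdot L_s$ in $\Omega$ is impossible unless one separates $L_i$ from the other — and since both lie strictly to one side of $L_i$ (being lifts of boundary components distinct from $\al_i$, or translates thereof), overlapping projections in fact force $\al_i^k g \cdot L_j = h \cdot L_s$. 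This yields $j = s$ and $\al_i^k g \in h H_j$, i.e. $h^{-1}\al_i^k g \in H_j$, so $h \in H_i g H_j$, which says $[g]_{i,j} = [h]_{r,s}$ in $\cO(F,\cA)$, contradicting that they are distinct.

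The main obstacle I anticipate is the geometric claim that orthogonal projections (in the Hilbert/projective sense) of two disjoint boundary lifts onto $L_i$ have disjoint images unless the lifts coincide. In the hyperbolic case this is the familiar fact about perpendicular feet of disjoint geodesics, but in the projective setting one must use strict convexity of $\Omega$ (guaranteed by \cite{Goldman:1990vp}) together with the fact that $\eta_i$ is a central projection from $\al_i^0$, which lies outside $\overline\Omega$ on the far side of $L_i$. The key point is that central projection from a fixed exterior point is monotone along $\partial\Omega$, so it maps the cyclic order on $\partial\Omega$ to the linear order on $L_i$; two disjoint chords of $\Omega$ lying on the same side of $L_i$ subtend disjoint arcs of $\partial\Omega$ (when not nested) and hence project to disjoint subsegments of $L_i$, while if they are nested then one of the lifts would have to cross $L_i$ or cross the other lift — contradicting total geodesicity of $\partial F$ as in Lemma~\ref{lem:injective}. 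Once this monotonicity is established, the coset bookkeeping above is routine.
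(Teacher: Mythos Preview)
Your proof is correct and follows the same strategy as the paper: reduce to showing that overlapping projections onto $L_i$ force the corresponding boundary lifts to intersect (or coincide), contradicting that $\partial F$ is a totally geodesic $1$-submanifold. You are considerably more careful than the paper's two-sentence argument---in particular, you make explicit the $H_i$-equivariance of $\eta_i$, the double-coset bookkeeping, and the exclusion of the nested case via strict convexity, all of which the paper leaves implicit in the phrase ``by fixing lifts, one has $g\cdot L_j\cap h\cdot L_s\neq\emptyset$.''
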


\begin{proof}
If $U_{i,j}^g$ intersects $U_{r,s}^h$, then $i = r$ and by fixing lifts, one has $g \cdot L_j \cap h \cdot L_s \neq\emptyset$, which would mean that $\partial F$ is not a totally geodesic 1-submanifold.
\end{proof}

We define $G_F\co \cO(F, \cA) \to \br^+$ by
$$G_F([g]_{i,j}) = \log B_\bp(\al_i^+, \eta_i(g\cdot \al_j^+), \al_i^-, \eta_i(g\cdot \al_j^-))$$
for $[g]_{i,j} \in \cO(F, \cA)$.  Let $\rho$ be a $3$-Hitchin representation realizing $F$, then by a standard fact in projective geometry about cross-ratios of four lines $$G_F([g]_{i,j}) = \log B_\rho(\al_i^+, g\cdot \al_j^+, \al_i^-, g\cdot \al_j^-)\, ,$$
which agrees with our function in Theorem \ref{thm:identity}. We can then write Basmajian's identity:

\begin{Prop}[Basmajian's identity for $\rp^2$-surfaces]\label{prop:rp2}
Let $F$ be a connected compact orientable convex $\rp^2$-surface with non-empty totally geodesic boundary whose double has genus at least 2. Let $\cA=\{\al_1, \ldots, \al_m\}$ be a positive peripheral marking.  Then,
$$\ell_F(\partial F) = \sum_{x\in \cO(F,\cA)} G_F(x)$$
where $\ell_F$ measures length in the Hilbert metric on $F$ and $\ell_F(\partial F) = \sum_{i=1}^n \ell_F(\al_i)$.  Furthermore, if $F$ is hyperbolic, then this is Basmajian's identity.  
\end{Prop}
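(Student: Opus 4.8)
The plan is to follow Basmajian's original argument, replacing hyperbolic horoball/boundary geometry with the Hilbert-metric picture on the convex $\rp^2$-surface and its double. First I would fix a boundary component $\al_i$ and examine the geodesic $L_i$ in $\Omega$ stabilized by $\al_i$. The key local statement is that the orthogonal projection $\eta_i$ foliates a full-measure subset of $L_i$ (modulo $\al_i$) by the projections $\wt U_{i,j}^g$ of all other boundary lifts $g\cdot L_j$. Concretely, I would consider a single period of $L_i$ under $\al_i$, which has Hilbert length exactly $\ell_F(\al_i)$, and show that its complement minus the countable union $\bigcup_{[g]_{i,j}} \wt U_{i,j}^g$ has measure zero with respect to the length element induced by $h$ on $L_i$.

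The second step is the measure-zero claim, which is where Theorem \ref{thm:measure} (equivalently Lemma \ref{lem:zero}) enters. A point $x\in L_i$ not covered by any $\wt U_{i,j}^g$ corresponds, under the projection $\eta_i$ from $\al_i^0$, to a point of $\partial\Omega$ that is \emph{not} separated from $L_i$ by any translate $g\cdot L_j$ of a boundary lift; such a point lies in the limit set $\bndry(F)\subset\bndry(\wh F)$ of the subsurface group. Since the Hilbert length element along $L_i$ is, up to the smooth reparametrization given by the projective cross ratio, comparable to the Lebesgue measure pulled back from the Frenet limit curve of $\wh\rho$ (both $\im\xi_{\wh\rho}$ and the relevant lines sit in a common affine chart, and $\eta_i$ is a projective map hence $C^1$ with nonvanishing derivative on the compact arc in question), the set of uncovered points has $\mu_{\wh\rho}$-measure zero by Lemma \ref{lem:zero}, hence length zero. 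This is the step I expect to be the main obstacle: one must carefully match the $1$-dimensional Hilbert measure on a fundamental arc of $L_i$ with the measure $\mu_{\wh\rho}$ on $\bndry(F)$, keeping track of the two competing parametrizations (the Hilbert length parameter and the Frenet parameter) and verifying that the identification is bi-Lipschitz on the compact piece so that null sets are preserved.

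The third step is bookkeeping: by Lemma \ref{lem:injective} and the disjointness lemma, the sets $U_{i,j}^g=\pi(\wt U_{i,j}^g)$ for distinct $[g]_{i,j}\in\cO(F,\cA)$ are pairwise disjoint embedded arcs in $\wh F$ (equivalently, the $\wt U_{i,j}^g$ tile a fundamental domain of $L_i$ under $\al_i$ for each fixed $i$), so summing Hilbert lengths gives
\[
\ell_F(\al_i) \;=\; \sum_{j=1}^{m}\ \sum_{[g]_{i,j}\in\cO_{i,j}(F,\cA)} \operatorname{length}_h\!\big(\wt U_{i,j}^g\big).
\]
Finally I would compute $\operatorname{length}_h(\wt U_{i,j}^g)$ explicitly: the Hilbert length of the subarc of $L_i$ with endpoints $\eta_i(g\cdot\al_j^+)$ and $\eta_i(g\cdot\al_j^-)$ is, by definition of $h$ via $B_\bp$ and the fact that $L_i$ meets $\partial\Omega$ at $\al_i^\pm$, exactly $\log B_\bp(\al_i^+,\eta_i(g\cdot\al_j^+),\al_i^-,\eta_i(g\cdot\al_j^-)) = G_F([g]_{i,j})$. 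Summing over $i$ and invoking the cross-ratio-of-four-lines identity $G_F([g]_{i,j})=\log B_\rho(\al_i^+, g\cdot\al_j^+, \al_i^-, g\cdot\al_j^-)$ already recorded before the statement yields both the Hilbert form of the identity and its agreement with Theorem \ref{thm:identity}. The final sentence, that this recovers Basmajian's identity when $F$ is hyperbolic, follows from the remark that a conic domain is projectively a disk with $h = 2 d_\bh$: then $L_i$ is a hyperbolic geodesic, $\eta_i$ is hyperbolic orthogonal projection, $\ell_F(\al_i) = 2\,\ell_\sigma(\al_i)$ has the factor of $2$ absorbed into the identification $\cO(F,\cA)\cong\cO(\Sigma,\sigma)$ of Proposition \ref{lem:bijection} (each orthogeodesic appears with both orientations), and the elementary computation $\log B_\bp$ over a hyperbolic geodesic segment of the relevant width equals $2\log\coth(\ell/2)$ for an orthogeodesic of length $\ell$.
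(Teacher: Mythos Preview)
Your overall strategy matches the paper's: decompose a fundamental arc of each $L_i$ into the disjoint projections $\wt U_{i,j}^g$, invoke Lemma~\ref{lem:zero} together with differentiability of $\pi\circ\eta_i$ to kill the complement, and read off $\operatorname{length}_h(\wt U_{i,j}^g)=G_F([g]_{i,j})$ directly from the definition of the Hilbert metric. Your discussion of the measure-zero step is in fact more careful than the paper's, which simply records that $\pi\circ\eta_i$ is differentiable and so pushes the null set $\bndry(F)$ forward to a null set on $\al_i$.

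The one place your proposal goes wrong is the final hyperbolic verification. Your accounting for the factor of $2$ is incorrect: Proposition~\ref{lem:bijection} is a \emph{bijection} between $\cO(F,\cA)$ and the set of \emph{oriented} orthogeodesics $\cO(\Sigma,\sigma)$, and Basmajian's identity as stated in the introduction is already a sum over oriented orthogeodesics (equivalently, over the orthospectrum with multiplicity). There is no extra orientation doubling to absorb anything. The factor of $2$ from $h=2d_{\bh}$ simply appears on both sides and cancels. Moreover, your claimed value ``$2\log\coth(\ell/2)$'' for $\log B_\bp$ is not consistent with either choice of what $\ell$ denotes; the paper instead carries out the computation explicitly in the Klein disk (Figure~\ref{fig:hyperbolic}): with $L_i$ the horizontal diameter and $g\cdot L_j$ the chord with endpoints $(\pm x,y)$, one finds the Hilbert orthogeodesic length $\ell=\log\frac{1+y}{1-y}$ and the Hilbert projection width $L=2\log\frac{1+x}{1-x}$, and then $x^2+y^2=1$ yields $L=4\log\coth(\ell/4)$. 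Halving both to pass to hyperbolic lengths gives $L_h=2\log\coth(\ell_h/2)$, which is the Basmajian summand. You should replace your heuristic with this computation (or an equivalent one).
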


\begin{proof}
Abusing notation, we will use $\al_i$ to denote both the element in $\pi_1(F)$ and its geodesic representative in $F$.  From above, we have $U_{i,j}^g$ is an interval embedded in $\al_i$ and by construction
$$\ell(U_{i,j}^g) = \log B_\bp(\al_i^+, \eta_i(g\cdot \al_j^+), \al_i^-, \eta_i(g\cdot \al_j^-)) \,.$$
For a fixed $i$, the complement of 
$$\bigcup_{[g]_{i,j} \in \mathscr{O}(F,\cA)} U_{i,j}^g$$
in $\al_i$ is the projection of $\bndry(F)$, or $\pi(\eta_i(\bndry(F)))$, which has measure zero by Lemma \ref{lem:zero} and the fact that $\pi \circ \eta_i$ is differentiable. This gives the identity as stated.

\begin{figure}[t]
\begin{center}\begin{overpic}[scale=0.35]{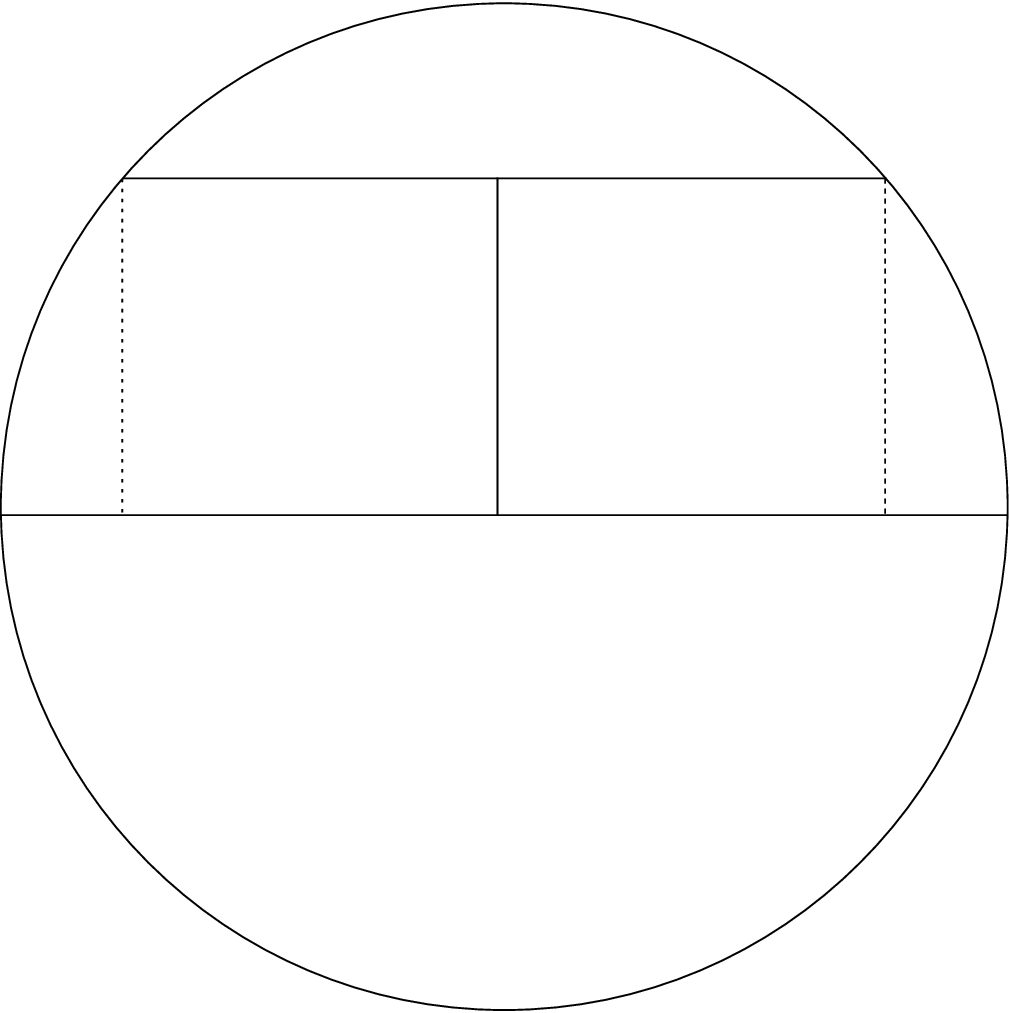}
\put(88,84){$(x,y)$}
\put(-8,84){$(-x,y)$}
\put(40.5,41.5){$(0,0)$}
\put(-21,47.5){$(-1,0)$}
\put(101,47.5){$(1,0)$}
\put(70,41.5){$L_i$}
\put(58,86){$g\cdot L_j$}
\end{overpic}
\end{center}
\caption{A standard diagram for the orthogonal projection of $g\cdot L_j$ onto $L_i$ in the hyperbolic case.}
\label{fig:hyperbolic}
\end{figure}

We now show that this is Basmajian's identity in the case that $\Sigma$ is hyperbolic. 
In this case, we may draw a standard picture with $\Omega$ being the unit disk in an affine patch as in Figure \ref{fig:hyperbolic}.  The line connecting $g\cdot L_j$ and $L_i$ is a lift of the orthogeodesic corresponding to the element $[g]_{i,j}$ (this can be seen by considering the corresponding geodesics in the Poincar\'e disk model).  We have
$$\ell = \log B_\bp((0,-1), (0,y), (0,1), (0,0)) = \log\left( \frac{1+y}{1-y}\right)$$
is the length of this orthogeodesic in the Hilbert metric and let
$$L = \log B_\bp((-1,0), (x,0), (1,0), (-x,0))= 2\log \left(\frac{1+x}{1-x}\right)$$
be the length of the projection of $g\cdot L_j$ onto $L_i$.  From this we see that 
$$x = \tanh \frac{L}4 \text{ and } y = \tanh \frac{\ell}2 \,.$$
From $x^2+y^2 =1$ we see that 
$$1 = \tanh^2\left(\frac{L}{4}\right) + \tanh^2 \left(\frac{\ell}{2}\right) \quad \Lra \quad \tanh^2\left(\frac{L}{4}\right) = \sech^2 \left(\frac{\ell}{2}\right) \,.$$
Now using the fact that 
$$\mathrm{arctanh}(z) = \frac12 \log\left( \frac{1+z}{1-z}\right)$$
we have
$$L = 4 \text{arctanh}\left( \sech \left(\frac{\ell}{2}\right)\right) = 2 \log\left(\frac{1+  \sech \left(\frac{\ell}{2}\right)}{1- \sech \left(\frac{\ell}{2}\right)}\right) = 4 \log \coth\left( \frac{\ell}4\right).$$
Recalling that the Hilbert metric is twice the hyperbolic metric, we recover
$$\ell_h(\partial F) = \sum_{\ell_h\in |\cO(F)|}2 \log\coth\left(\frac{\ell_h}{2}\right) \, ,$$
where $\ell_h(\gamma)$ measures length of $\gamma$ in the hyperbolic metric on $F$, which is as desired.
\end{proof}


\section{Basmajian's Identity}

We saw in the case of convex $\rp^2$-structures (or $3$-Hitchin representations) on a bordered surface that Basmajian's identity is derived by computing the lengths of orthogonal projections in the universal cover.  In the $n$-Hitchin case, we no longer have the same picture of a universal cover (for $n>3$), but the idea is roughly the same.  In fact, in terms of cross ratios, we will be using the same function on the orthoset as the summand.  

Let $\Sigma$ be a compact surface with $m>0$ boundary components whose double has genus at least 2.  Choose a positive peripheral marking $\cA = \{\al_1, \cdots, \al_m\}$, then for a Hitchin representation $\rho$ we define the function $G_\rho \co \cO(\Sigma, \cA) \to \br_+$ by 
\[
G_\rho\left([g]_{i,j}\right) = \log B_\rho \, (\al_i^+, g\cdot \al_j^+, \al_i^-, g\cdot \al_j^-)\,.
\]
We think of $G_\rho([g]_{i,j})$ as measuring the length of the projection of the 
line connecting $g\cdot \al_j^+$ and $g\cdot \al_j^-$ to the line connecting $\al_i^+$ and $\al_i^-$.  

\basidentity*

Before we start the proof, we need a short lemma.  For a surface $\Sigma$ (with or without boundary) recall that $\bndry(\Sigma)$ has an ordering and for $x,y \in \bndry(\Sigma)$, we defined
\[
(x,y) = \{ z \in \bndry(\Sigma) : (x,z,y) \text{ is positive}\}.
\]
Given an element $\al\in \pi_1(\Sigma)$ and a reference point $\zeta \in (\al^+, \al^-) \subset \bndry(\Sigma)$, define the function $F_\rho\co (\al^+, \al^-)\to \br$  by
\begin{equation}\label{eq:projection}
F_\rho(x) = \log B_\rho(\al^+, x, \al^-, \zeta) \, .
\end{equation}
Note that  $B_\rho(\al^+, x, \al^-, \zeta)$ is positive by \eqref{order1} and \eqref{symmetry3}.

\begin{Lem}\label{L: homeo}
$F_\rho$ is a homeomorphism onto its image.  Further, if $\Sigma$ is closed, then $F_\rho$ is surjective.
\end{Lem}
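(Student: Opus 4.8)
The plan is to show that $F_B$ is continuous, strictly monotone (hence injective and a homeomorphism onto its image), and that its image is all of $\br$ when $\Sigma$ is closed. Continuity is immediate: $B$ is H\"older, hence continuous, on $\bndry(\Sigma)^{4*}$, and for $x \in (\al^+,\al^-)$ the quadruple $(\al^+, x, \al^-, \zeta)$ lies in $\bndry(\Sigma)^{4*}$ since $\al^+ \neq \zeta$ and $x \neq \al^-$; moreover $B(\al^+, x, \al^-, \zeta) > 0$ by \eqref{order1} and \eqref{symmetry3} as noted, so $\log$ is applied to a positive quantity and $F_B$ is continuous.

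For injectivity, I would use the cocycle identity \eqref{cocycle2} to compare values at two points $x, y \in (\al^+,\al^-)$. Writing
\[
B(\al^+, x, \al^-, \zeta) = B(\al^+, x, \al^-, y)\, B(\al^+, y, \al^-, \zeta),
\]
we get $F_B(x) - F_B(y) = \log B(\al^+, x, \al^-, y)$, so it suffices to show $B(\al^+, x, \al^-, y) \neq 1$ when $x \neq y$ — indeed, $B(\al^+, x, \al^-, y) = 1$ forces $\al^+ = \al^-$ or $x = y$ by \eqref{normalization2}. This shows $F_B$ is injective. To upgrade to "homeomorphism onto its image" I then observe that $(\al^+,\al^-)$ is an interval (a connected subset of the circle $\Sp^1_\infty$), so a continuous injection from it is automatically a homeomorphism onto its image; alternatively, one can extract strict monotonicity with respect to the cyclic order from the ordered-cross-ratio inequalities, which also pins down the direction of monotonicity. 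I would in fact argue monotonicity directly: if $(\al^+, x, y, \al^-)$ (equivalently $x$ between $\al^+$ and $y$) is the order, then $(\al^+, x, \al^-, y)$ is cyclically ordered, so by \eqref{order2} we would have $B(\al^+,x,\al^-,y) < 0$ — but we also need this expression positive for the $\log$, so I must be careful about which normalization and which of \eqref{order1}/\eqref{order2} applies after the symmetry rearrangement; sorting out this sign/ordering bookkeeping is the main technical nuisance. Concretely: by \eqref{symmetry3}, $B(\al^+,x,\al^-,y) = B(\al^+, y, \al^-, x)^{-1}$, and whichever of the two is the cyclically ordered quadruple has value in $(0,1)$ or $(1,\infty)$ via \eqref{order1}, giving strict monotonicity of $F_B$ along $(\al^+,\al^-)$.

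For surjectivity when $\Sigma$ is closed, I would let $x \to \al^+$ and $x \to \al^-$ and show $F_B(x) \to +\infty$ and $F_B(x) \to -\infty$ (or vice versa), so that by continuity and the intermediate value theorem the image is all of $\br$. The cleanest way: take $x = \al^n \zeta$ for $n \in \bz$. Then by $\pi_1(\Sigma)$-invariance of $B$ and \eqref{cocycle2} telescoping,
\[
F_B(\al^n \zeta) = \log B(\al^+, \al^n\zeta, \al^-, \zeta) = \sum_{k=0}^{n-1} \log B(\al^+, \al^{k+1}\zeta, \al^-, \al^k \zeta) = n\, \ell_B(\al),
\]
using that $B(\al^+, \al\zeta, \al^-, \zeta) = B(\al \cdot \al^+, \al\zeta, \al\cdot\al^-, \al\zeta') $-type invariance identifies each term with the period $\ell_B(\al)$ (up to sign from symmetry \eqref{symmetry1}). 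Since $\ell_B(\al) = \ell_\rho(\al) > 0$ for a Hitchin representation (the length of a loxodromic is positive), $F_B(\al^n\zeta) \to \pm\infty$ as $n \to \pm\infty$, and since $\al^n\zeta \to \al^\pm$ — here is where closedness enters, because on a closed surface the $\pi_1(\Sigma)$-orbit dynamics give $\al^n\zeta \to \al^+$ and $\al^{-n}\zeta \to \al^-$ for any $\zeta \neq \al^\pm$ — continuity of $F_B$ plus the intermediate value theorem yields surjectivity onto $\br$.

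The step I expect to be the main obstacle is the monotonicity/sign bookkeeping in the second paragraph: correctly tracking which symmetry relation \eqref{symmetry1}–\eqref{symmetry3} to apply so that the ordered-cross-ratio inequalities \eqref{order1}–\eqref{order2} deliver a clean strict inequality for $F_B(x) - F_B(y)$, and confirming the period computation $F_B(\al^n\zeta) = n\,\ell_B(\al)$ has the right sign (which may require replacing $\al$ by $\al^{-1}$ or reversing the role of $\al^\pm$). Everything else — continuity, the interval argument, the IVT — is routine.
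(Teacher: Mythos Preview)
Your argument is essentially correct and, for injectivity and monotonicity, mirrors the paper's: the paper also uses the cocycle identity to reduce $F_B(x)=F_B(x')$ to $B(\al^+,x,\al^-,x')=1$ and then invokes \eqref{normalization2}, and then simply cites \eqref{order1} for order-preservation. Your anxiety about the sign bookkeeping is unnecessary; once you have $F_B(x)-F_B(y)=\log B(\al^+,x,\al^-,y)$, a single application of \eqref{order1} (after matching the quadruple pattern) gives strict monotonicity. One small caveat: your sentence ``$(\al^+,\al^-)$ is an interval (a connected subset of the circle)'' is only true when $\Sigma$ is closed; for bordered $\Sigma$ the set $(\al^+,\al^-)\subset\bndry(\Sigma)$ is totally disconnected, so you must rely on the monotonicity argument (which you do give) rather than the ``continuous injection on an interval'' shortcut.

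For surjectivity the paper takes a shorter route than your period/telescoping computation: it simply observes that as $x\to\al^+$ the normalization \eqref{normalization1} (together with continuity of $B$ on $\bndry(\Sigma)^{4*}$) forces $B(\al^+,x,\al^-,\zeta)\to 0$, hence $F_B(x)\to-\infty$; the other endpoint is handled via \eqref{symmetry3}. Closedness enters exactly where you say, namely to ensure $(\al^+,\al^-)$ is connected so the intermediate value theorem applies. Your telescoping argument $F_B(\al^n\zeta)=\pm n\,\ell_B(\al)$ is valid and has the virtue of being explicit, but it is more work than needed; the paper's endpoint-limit argument is the cleaner way to finish.
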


\begin{proof} A proof of this fact is given in the proof of \cite[Theorem 4.1.2.1]{Labourie:2009gb}. We include the argument here for completeness.  First injectivity:  if $B_\rho(\al^+, x,\al^-, \zeta) = B_\rho(\al^+, x', \al^-, \zeta)$, then $B_\rho(\al^+, x, \al^-, x') = 1$  by \eqref{cocycle2}; hence, $x=x'$ by \eqref{normalization2}. Furthermore, the inequality \eqref{order1} implies $F_\rho$ preserves the ordering and therefore it is a homeomorphism onto its image.  Lastly, note that as $x\to \al^\pm$ we have $F_\rho(x) \to \mp\infty$ by \eqref{normalization1} and \eqref{symmetry3} and that $(\al^+, \al^-)$ is connected if $\Sigma$ is closed.
\end{proof}

\begin{proof}[Proof of Theorem \ref{thm:identity}] 
We use the framework from \cite[Theorem 4.1.2.1]{Labourie:2009gb}. Let us focus our attention on a single boundary component.  Let $\al = \al_1$.  Fix a finite area hyperbolic structure on $\Sigma$ so that $\partial \Sigma$ is totally geodesic. Identify $\Sigma$ with $U/\Gamma$ for a convex set $U\subset \bh^2$ whose boundary in $\bh^2$ is a disjoint union of geodesics. With this identification, $\bndry (\Sigma) \iso \partial_\infty U = \overline{U}\cap \Sp^1_\infty$ and $\pi_1(\Sigma) \iso \Gamma$.  Moreover, $\Sp^1_\infty\smallsetminus \partial_\infty U$ is a union of {\it disjoint} intervals of the form $\tilde I_\be = (\be^-, \be^+)$ for primitive peripheral elements $\be \in \Gamma$ which have $\Sigma$ on their left. By construction, $\beta =  g \al_j g^{-1}$ for some $a_j$ in the positive peripheral marking $\cA$ and $g \in \Gamma$.

Observe that $\left(g \al_j g^{-1}\right)^\pm = \al_k^\pm$ if and only if $g \al_j g^{-1} = \al_k$ because $g \al_j g^{-1}$ and $\al_k$ are primitive. In particular, we must have that $j = k$ and $g \in H_j = \langle \al_j \rangle$. We therefore conclude that $\tilde I_{g_1 \al_j g_1^{-1}} = \tilde I_{g_2 \al_k g_2^{-1}}$ if and only if $j = k$ and $g_2^{-1}g_1 \in H_j$; hence, for $\be = g \al_j g^{-1}$, the map
\[
\tilde I_\beta \mapsto gH_j,
\]
gives a bijection
$$ \left\{ \text{Components } \tilde I_\be  \text{ of  } \Sp^1_\infty \smallsetminus \partial_\infty U\right\} \Longleftrightarrow \bigsqcup_{1\leq j \leq n} \pi_1(\Sigma)/H_j \, .$$

Let $B = B_\rho$ be the cross ratio associated to $\rho$ and fix some $\zeta \in (\al^+, \al^-) \subset \bndry(\Sigma)$ in order to define $F_\rho\co (\al^+, \al^-)\to \br$ as above in Lemma \ref{L: homeo}. 
Since $F_\rho$ is increasing, we see that the set $\br \smallsetminus F_\rho(\partial_\infty U)$ is a union of disjoint intervals $\hat I_\be = (F_\rho(\be^-), F_\rho(\be^+))$. Further, \begin{align*}
F_\rho(\al \cdot x)& =  \log B(\al^+, \al \cdot x, \al^-, \zeta) \\
&  = \log \frac{B(\al^+, x, \al^-, \zeta)}{B(\al^+, x, \al^-, \al \cdot x)}\\
& = F_\rho(x)-\ell_\rho(\al)
\end{align*}
by \eqref{cocycle2} and \eqref{symmetry3}. Now, set $\mathbb{T} = \br /\ell_\rho(\al) \bz$ and define $\pi: \br \to \mathbb{T}$ to be the projection.  
From above, we have that  $\hat I_{\al \be \al^{-1}} \cap \hat I_ \be = \emptyset$ and $$\hat I_{\al \be \al^{-1}}  = (F_\rho(\al \cdot  \be^+), F_\rho(\al \cdot \be^-)) = \hat I_ \be - \ell_\rho(\al)$$ so $\pi|_{\hat I_\be}$ is injective.  Define $I_\be = \pi(\hat I_\be)$, then we have the bijection
$$ \left\{ \text{Components } I_\be  \text{ of  } \mathbb{T} \smallsetminus \pi(F_\rho(\partial_\infty U)) \right\} \Longleftrightarrow \left(\bigsqcup_{1\leq j \leq m} H_1 \bs \pi_1(\Sigma)/H_j \right) \smallsetminus \{H_1 e H_1\} \, ,$$
where we remove $H_1 e H_1$ as it corresponds to the interval $\tilde I_\al$, which is outside $(\al^+, \al^-)$. Using our notation from \S\ref{sec:cosets}, the right hand side is simply $\bigsqcup_{1\leq j \leq m} \cO_{1,j}(\Sigma, \cA)$.

For each $I_\be$, there is a $j$ and an element $[g]_{1,j} \in \cO_{1,j}(\Sigma, \cA)$, where $\be = g\al_jg^{-1}$.  With this representative, we see that if $\lambda$ is the Lebesgue measure on $\br$, then
\begin{align*}
\lambda(I_\be)
&= F_\rho(\be^+) - F_\rho(\be^-) \\
&= \log \frac{B(\al^+, \be^+, \al^-, \zeta)}{B(\al^+, \be^-, \al^-, \zeta)} \\
&= \log\left(B(\al^+, \be^+, \al^-, \zeta)\cdot B(\al^+, \zeta, \al^-, \be^-)\right) \text{ (by \eqref{symmetry3})} \\
&= \log B(\al^+, \be^+, \al^-, \be^-) \text{ (by \eqref{cocycle2})}\\
&= \log B(\al^+, g\cdot \al_j^+, \al^-, g \cdot \al_j^-)\\
& = G_\rho([g]_{1,j}) \, .
\end{align*}
 
It follows that \begin{equation}\label{eq:onecpt}\ell_\rho(\al) = \lambda(\mathbb{T}) =  \lambda(\pi(F_\rho(\partial_\infty U)))\, \,  + \sum_{1 \leq j \leq m} \; \sum_{x \in \cO_{1,j}(\Sigma, \cA)} G_\rho(x) \, .\end{equation}
By definition, $\pi\circ F_\rho(x) =  \pi \circ \log \circ \mathfrak{B}\left(\theta_\rho(\al^+), \xi_\rho(x), \theta_\rho(\al^-), \xi_\rho(\zeta)\right)$. Thus, Lemma \ref{lem:zero} and the fact that $\pi \circ \log \circ \mathfrak{B}$ is differentiable, tells us that $\lambda(\pi(F_\rho(\partial_\infty U))) = 0$. This gives the identity for a single boundary component. By doing the same for the other boundary components and summing, we have arrived at 
$$\ell_\rho(\partial \Sigma) = \sum_{x\in \cO(\Sigma, \cA)} G_\rho(x) \,.$$
We finish by noting that the proof of Proposition \ref{prop:rp2} implies that if $\rho$ is Fuchsian then we recover Basmajian's original identity.
\end{proof}

\textit{\textbf{Remarks.}}   
(1)
In Theorem \ref{thm:identity}, $G_\rho$ is defined using the Frenet curve associated to the doubled representation $\hat{\rho}: \pi_1(\hat{\Sigma}) \to \PSL(n,\R)$. However, if we are given $\Sigma$ as a subsurface of a closed surface $S$ and a Hitchin representation $\rho': \pi_1(S) \to \PSL(n,\br)$,  then we may use the cross ratio associated to $\rho'$ restricted to $\pi_1(\Sigma)^{4*}$, which agrees with that of $\hat{\rho}$.

(2) Unlike the case for closed surfaces, $n$-Hitchin representations for $\Sigma$ do not fill a component of the $\PSL(n,\br)$-character variety.
We note that this identity can still hold for limits of Hitchin representations, similar to the hyperbolic setting.
For example, if $\rho_i$ is a sequence of Hitchin representations converging to a representation $\rho$ such that $\rho(\al_1)$  has all eigenvalues equal to 1 and $\rho(\gamma)$ is purely loxodromic for all nontrivial elements not conjugate to a power of $\al_1$, then by \eqref{eq:onecpt} we see that $G_{\rho_i}(x) \to G_\rho(x) = 0$ for all $x \in \cO_{1,j}(\Sigma, \cA)$. Thus, the identity extends to $\rho$.


\section{Relations to the McShane-Mirzakhani Identity}\label{sec:connections}

In this section, we discuss the relation between our identity and Labourie-Mcshane's generalization of the McShane-Mirzakhani identity.  We will first consider the hyperbolic surface case and then generalize to Hitchin representations.  

There are three spectral identities on hyperbolic surfaces with nonempty totally geodesic boundary (the McShane-Mirzkani \cite{mirzakhani}, Basmajian \cite{Basmajian:1993eu}, and Bridgeman \cite{Bridgeman:2011ffa} identities)  that originally appeared to be using completely different ideas, but were put into a unified framework by S.P. Tan by viewing them as different decompositions of the geodesic flow.  This viewpoint is outlined in the survey \cite{Bridgeman:2016tw}. These ideas led to the Luo-Tan identity  for closed surfaces \cite{Luo:2014wi}. This is the viewpoint we take in this section.

We note that finding relationships between the identities listed has been of recent interest.  Connections between Basmajian and Bridgeman's identities were explored in \cite{Bridgeman:2014bz} and \cite{Vlamis:2015is}.  Also, in a sense, the identity of Luo-Tan for closed surfaces gives connections between Bridgeman's identity and that of McShane-Mirzakhani.

The McShane-Mirzakhani identity gives the length of a boundary component as sum over a collection of  pairs of pants in the surface.  As the geometry of a pair of pants is dictated by the lengths of its boundary components, the summands depend on the lengths of simple closed geodesics in the surface.  In order to prove this identity, one has to give a decomposition of the boundary into intervals.  As this is the same idea for Basmajian's identity, the goal of this section is to relate the Basmajian decomposition of the boundary to that of the McShane-Mirzakhani decomposition.

\subsection{McShane-Mirzakhani Decomposition}
Let $F$ be a compact hyperbolic surface with nonempty totally geodesic boundary.  Fix $\al$ to be a component of $\partial F$.  For a point $x\in\al$, let $\be_x(t)$ be the geodesic obtained by flowing the unit vector $v_x$ normal to $\al$ at $x$ for time $t$.  Define $t_x \in \br_+$ to be either
\begin{itemize}
\item the first value of $t$ such that there exists $t_0\in [0, t)$ with $\be_x(t) = \be_x(t_0)$, i.e. $t_x$ is the first time the geodesic obtained by flowing $v_x$ hits itself, or
\item if the arc obtained from this flow is simple and returns to the boundary, then we let $t_x$ to be the time it takes to return to $\partial F$, i.e. $\be_x(t_x) \in \partial F$, or
\item if the arc is simple and infinite in length, let $t_x = \infty$.
\end{itemize}
Note that the set of boundary points with $t_x = \infty$ is measure zero as the limit set projects to a set of measure zero on $\al$ in the natural Lebesgue measure class.  For those $x \in  \al$ with $t_x < \infty$,
define the geodesic arc $\delta_x = \be_x([0,t_x])$. The arc $\delta_x$ defines a pair of pants $P_x$ as follows (there are two cases): 
\begin{itemize}
\item[(i)] If $\delta_x$ is simple and finite, let $\al'$ be the component of $\partial F$ containing $n_t(t_x)$  (possibly $\al'=\al$) and define $P_x$ to be the neighborhood of $\delta_x \cup\al \cup\al'$ with totally geodesic boundary.
\item[(ii)] If $\delta_x$ is not simple, then define $P_x$ to be the neighborhood of $\delta_x \cup \al$ with totally geodesic boundary.  This case is shown in Figure \ref{fig:pants}.
\end{itemize}

\begin{figure}[t]
\begin{center}\begin{overpic}[scale=0.5]{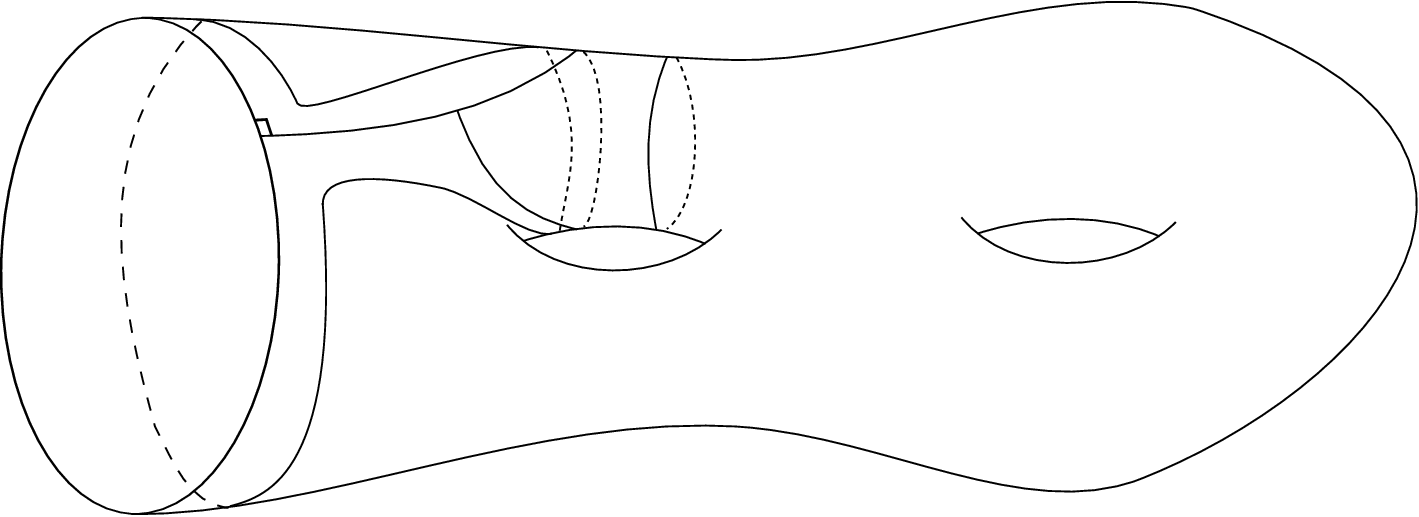}
\put(15,26){$x$}
\put(29,24.8){$\delta_x$}
\end{overpic}
\end{center}
\caption{An example of $P_x$ with $\delta_x$ non-simple.}
\label{fig:pants}
\end{figure}

The McShane-Mirzakhani decomposition of the boundary is as follows.  Let $\mathcal{P}_\al(F)$ be the set of embedded pairs of pants $P \subset F$ with geodesic boundary and with $\al$ as a boundary component.  For $P\in \mathcal{P}_\al(F)$ set
$$V_P = \{x \in \al \colon P_x = P\},$$
We then have that $V_P$ is a disjoint union of two intervals unless $P$ contains two components of $\partial F$, in which case $V_P$ is a single interval.
Further, $V_P\cap V_{P'} = \emptyset$ for $P\neq P'$ yielding
$$\ell(\al) = \ell\left(\bigcup_{P\in\mathcal{P_\al}(F)} V_P\right) = \sum_{P\in \mathcal{P}_\al(F)} \ell(V_P) \,,$$
see \cite{mirzakhani} or \cite{Bridgeman:2016tw} for details.
The McShane-Mirzakhani identity is derived from computing $\ell(V_P)$ for $P\in \mathcal{P}_\al(F)$.  

In the case that $F$ has a single boundary component, this identity becomes
$$\ell(\al) = \sum_{P\in \mathcal{P}_\al(F)} \log\left( \frac{e^{\frac{\ell(\partial P)}2}+e^{\ell(\partial F)}}{e^{\frac{\ell(\partial P)}2} +1}\right)\,.$$

\subsection{Comparing Decompositions}

In \S\ref{sec:projective}, we saw how to decompose the boundary for Basmajian's identity using orthogonal projection in the universal cover; let us give the same decomposition from a slightly different perspective that better matches the discussion on the McShane-Mirzakhani decomposition.  

For $x \in \partial F$, let $\be_x$ be the oriented geodesic obtained by flowing the vector normal to $\partial F$ based at $x$ as before.  $\be_x$ will have finite length and terminate in $\partial F$ for almost every $x\in \partial F$ as the limit set projects to a set of measure zero on $\partial F$.  For every orthogeodesic $\be \in \cO(F)$ we define
$$U_\be = \{x \in \partial F \colon \be_x \text{ is properly isotopic to } \be\}.$$
As no two orthogeodesics are properly isotopic, we see that $U_\be \cap U_{\be'} = \emptyset$ and as almost every $\be_x$ is properly isotopic to some orthogeodesic we again arrive at Basmajian's identity
$$\ell(\partial F) = \ell\left(\bigcup_{\be \in \cO(F)} U_\be \right) = \sum_{\be\in \cO(F)} \ell(U_\be) = \sum_{\be\in \cO(F)} 2 \log\coth \frac{\ell(\be)}2 \,.$$

As the McShane-Mirzakhani identity calculates the length of a particular boundary component, for $\al$ a component of $\partial F$, let $\cO_\al(F)$ be the collection of orthogeodesics emanating from $\al$. 

\begin{Prop}\label{prop:decomp}
Let $F$ be a compact hyperbolic surface with nonempty totally geodesic boundary. For each $\be \in \cO_\al(F)$, there exists $P\in \mathcal{P}_\al(F)$ such that $U_\be \subset V_P$.
\end{Prop}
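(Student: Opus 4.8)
The plan is to prove something slightly stronger than stated: the pair of pants $P_x$ produced by Mirzakhani's construction depends only on the proper isotopy class of the arc $\be_x$. Since $U_\be=\{x\in\al : \be_x \text{ is properly isotopic to }\be\}$, this immediately yields a single $P\in\mathcal{P}_\al(F)$ with $P_x=P$ for every $x\in U_\be$, i.e. $U_\be\subseteq V_P$, which is the assertion.

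First I would record that for $x\in U_\be$ the geodesic arc $\be_x$ meets $\partial F$ transversally at both endpoints: perpendicularly at its initial point on $\al$, and transversally at its terminal point, since a geodesic tangent to a boundary geodesic would coincide with it. Thus $\be_x$ is a properly embedded arc transverse to $\partial F$, and the proper isotopy $\be_x\simeq\be$ is an isotopy through such arcs. By the isotopy extension theorem this proper isotopy is induced by an ambient isotopy of $F$; in particular, any two arcs $\be_x,\be_{x'}$ with $x,x'\in U_\be$ are carried onto one another by a self-homeomorphism $\Phi$ of $F$ that is isotopic to the identity, respects the flow direction of the arcs (both emanate from $\al$), and preserves each component of $\partial F$ setwise. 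As a first consequence, $\be_x$ is simple if and only if $\be_{x'}$ is, so all the arcs $\{\be_x:x\in U_\be\}$ fall under the same one of Mirzakhani's two cases (i) and (ii).

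Next I would use $\Phi$ to compare the two instances of the construction. Because $\Phi$ is a homeomorphism carrying the oriented arc $\be_x$ onto $\be_{x'}$, it carries the first self-intersection point of $\be_x$ onto that of $\be_{x'}$ and the first return point of $\be_x$ to $\partial F$ onto that of $\be_{x'}$; hence $\Phi(\delta_x)=\delta_{x'}$. It also maps $\al$ onto $\al$, and in case (i) it maps the component containing the terminal endpoint of $\be_x$ onto the corresponding component for $\be_{x'}$ (these agree, since proper isotopy preserves the boundary component containing each endpoint). Therefore $\Phi$ carries the $1$-complex $\al\cup\delta_x$ (together with $\al'$ in case (i)) used to build $P_x$ onto the corresponding $1$-complex for $P_{x'}$. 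Since a regular neighbourhood is an isotopy invariant of a subcomplex, and passing to the neighbourhood with totally geodesic boundary is likewise an isotopy invariant, we conclude $P_x=P_{x'}$ in $\mathcal{P}_\al(F)$. Fixing any $x_0\in U_\be$ and setting $P=P_{x_0}$ then finishes the argument.

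The step I expect to carry the real content is the claim that $\delta_x$ transforms correctly under $\Phi$ — equivalently, that ``the initial segment of $\be_x$ up to its first self-intersection'' is a topological invariant of the oriented proper arc, not merely a metric one; this is exactly the naturality of Mirzakhani's construction under homeomorphisms of the pair $(F,\partial F)$, and once it is pinned down the rest is bookkeeping. (One could instead argue entirely in the universal cover: the perpendicular ray from a lift $\tilde x$ of $x$ exits the convex region $\widetilde F$ through the lift of the boundary component determined by $\be$, and the combinatorics of its self-intersections — in particular the conjugacy class in $\pi_1(F)$ whose axis straightens the first self-intersection loop, which is a cuff of $P_x$ — is constant as $\tilde x$ ranges over the subinterval of $\tilde\al$ corresponding to $U_\be$; but the ambient-isotopy argument above is cleaner and avoids this combinatorial analysis.)
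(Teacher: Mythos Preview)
Your strategy is exactly the paper's: show that $P_x$ depends only on the proper isotopy class of $\be_x$, so it is constant on $U_\be$. The paper's proof is two sentences: choose $x$ with $\be_x=\be$, set $P=P_x$, and assert that a proper isotopy from $\be_y$ to $\be$ carries $\delta_y$ to $\delta_x$, whence $P_y=P$.

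There is, however, a concrete error in your execution. You write that ``$\be_x$ is a properly embedded arc transverse to $\partial F$'' and then invoke the isotopy extension theorem to produce the ambient homeomorphism $\Phi$. But $\be_x$ need not be embedded: orthogeodesics, and the perpendicular geodesic arcs $\be_y$ in their proper homotopy classes, typically self-intersect --- this is precisely what triggers case~(ii) of the McShane--Mirzakhani construction. The isotopy extension theorem applies to isotopies of \emph{embeddings}, so when $\be$ is non-simple you cannot manufacture $\Phi$ this way, and the subsequent deductions (that $\Phi$ carries the first self-intersection of $\be_x$ to that of $\be_{x'}$, hence $\Phi(\delta_x)=\delta_{x'}$) are unsupported. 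Your parenthetical universal-cover sketch, or a direct argument that the proper homotopy class of $\be_y$ determines that of the lasso $\delta_y$, is closer to what is actually needed in the non-simple case; the paper simply asserts this step without further comment.
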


\begin{proof}
There exists $x$ such that $\be = \be_x$, so we set $P = P_x$. Given $y\in U_\be$, we know that there is a proper isotopy taking $\be_y$ to $\be$, which must also take $\delta_y$ to $\delta_x$.  Given the definition of $P_y$, we have that $P_y = P$.  
\end{proof}

For $P \in \mathcal{P}_\al(F)$, let 
$$\mathcal{O}_P = \{ \be \in \cO_\al(F) \colon U_\be \subset V_P\} \,.$$
We then immediately have:
\begin{Cor}\label{cor:relation}
Let $F$ be a compact hyperbolic surface with nonempty totally geodesic boundary. For $P\in \mathcal{P}(F)$ 
$$\ell(V_P) = \sum_{\be \in \cO_P} 2 \log\coth \frac{\ell(\be)}2 \,.$$
\end{Cor}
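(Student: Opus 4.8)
The plan is to combine the two decomposition results that have just been established: Basmajian's identity for a single boundary component and Proposition \ref{prop:decomp}, which says that each $U_\be$ with $\be \in \cO_\al(F)$ is contained in a unique $V_P$. The key structural fact to extract first is that, for a fixed $P \in \mathcal P_\al(F)$, the sets $\{U_\be : \be \in \cO_P\}$ form, up to a measure-zero set, a partition of $V_P$. One containment, $\bigcup_{\be\in\cO_P} U_\be \subset V_P$, is immediate from the definition of $\cO_P$. For the reverse containment up to measure zero, I would argue as follows: for almost every $x \in V_P$ the flowed geodesic $\be_x$ has finite length and terminates on $\partial F$ (the exceptional set, where $\be_x$ limits onto the limit set, has measure zero, exactly as recalled before Proposition \ref{prop:decomp}), so $\be_x$ is properly isotopic to some orthogeodesic $\be \in \cO_\al(F)$; then $x \in U_\be$, and since $x \in V_P$ we get $U_\be \cap V_P \neq \emptyset$, whence by Proposition \ref{prop:decomp} the (unique) pair of pants containing $U_\be$ must be $P$, i.e. $\be \in \cO_P$.

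With that partition in hand, I would compute
\[
\ell(V_P) = \ell\!\left(\bigcup_{\be \in \cO_P} U_\be\right) = \sum_{\be \in \cO_P} \ell(U_\be) = \sum_{\be \in \cO_P} 2\log\coth\frac{\ell(\be)}{2}\,,
\]
where the first equality uses that the complement of $\bigcup_{\be\in\cO_P}U_\be$ in $V_P$ has measure zero, the second uses the disjointness $U_\be \cap U_{\be'} = \emptyset$ for $\be \neq \be'$ (already noted in the Basmajian decomposition discussion), and the third is the computation $\ell(U_\be) = 2\log\coth(\ell(\be)/2)$ from Basmajian's identity (Proposition \ref{prop:rp2} in the hyperbolic case, or the displayed computation just before the statement).

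The only genuine subtlety — and the step I would be most careful about — is the claim that $\{U_\be : \be \in \cO_P\}$ exhausts $V_P$ up to measure zero, rather than merely sitting inside it; this is where one must invoke that almost every normal geodesic ray from $\al$ is a simple-or-eventually-self-intersecting \emph{finite} arc returning to $\partial F$, and hence properly isotopic to an orthogeodesic. Everything else is bookkeeping: disjointness and the length formula were already recorded in the Basmajian decomposition subsection, and Proposition \ref{prop:decomp} supplies the assignment $\be \mapsto P$. I would therefore present the corollary's proof in essentially three sentences, flagging the measure-zero exhaustion as the one point requiring the limit-set-has-measure-zero input.
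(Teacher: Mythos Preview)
Your proposal is correct and matches the paper's intended argument: the paper states the corollary with the phrase ``We then immediately have,'' relying on exactly the ingredients you identify---Proposition~\ref{prop:decomp}, the disjointness of the $U_\be$ and of the $V_P$, the fact that both families give full-measure decompositions of $\al$, and the formula $\ell(U_\be)=2\log\coth(\ell(\be)/2)$ displayed just before the statement. Your careful flagging of the measure-zero exhaustion step is appropriate, though the paper treats it as already established in the Basmajian decomposition paragraph.
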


\subsection{Decompositions in the Hitchin Setting}
In order to proceed, we need to translate the geometric language in the two decompositions to information about the fundamental groups of the surface.  We have already seen how to do this in the context of Basmajian's identity using the orthoset in \S\ref{sec:cosets}.  Now let us do the same for the McShane-Mirzakhani identity following \cite{Labourie:2009gb}. 

Let $\Sigma$ be a compact connected oriented surface with nonempty boundary whose double has genus at least two.  Fix a hyperbolic metric $\si$ on $\Sigma$ such that $\partial \Sigma$ is totally geodesic.  As we have done before, let us identify the universal cover of $\Sigma$ with a convex subset of $\bh^2$ cut out by geodesics. Fix a positive peripheral marking $\cA = \{\al_1, \ldots, \al_m\}$ for $\Sigma$ and let $\al = \al_1$ be a fixed peripheral element.  As in the previous section, we have the set $\mathcal{P}_\al(\Sigma, \si)$ consisting of embedded pairs of pants with totally geodesic boundary containing the component of $\partial \Sigma$ represented by $\al$.  We would like to replace these geometric objects with topological ones. In particular, we will translate $V_P$ into a subset of $S_\infty^1$ instead of a subset of $\al$ itself. In the geometric setting, this would be done via projection from $\al$ to $(\al^+, \al^-) \subset \Sp^1_\infty$.

Given $P \in \mathcal{P}_\al(\Sigma, \si)$ we can find a {\it good} pair $(\be,\gamma) \in \pi_1(P)^2$ such that $\al\gamma\be = e$ and $\be, \gamma$ oriented with $P$ on the left. Let $(\be',\gamma')$ be another good pair, then we will say that $(\be,\gamma) \sim (\be', \gamma')$ if for some $n$
\begin{align*}
\be' & = \al^n \be \al^{-n}\\
\gamma' & = \al^n \gamma \al^{-n} \,.
\end{align*}
Up to this equivalence there only exist two such pairs:  $(\be, \gamma)$ and $(\gamma, \gamma \be \gamma^{-1})$.  These pairs and equivalences depend only on the topology, so let us define $\mathcal{P}_\al(\Sigma)$ to be the set of isotopy classes of embedded pairs of pants in $\Sigma$ containing $\al$ as a boundary component.  Note that we have a natural bijection $\mathcal{P}_\al(\Sigma,\si) \to \mathcal{P}_\al(\Sigma)$ by sending $P$ to its isotopy class $[P]$.   

The pairs $(\be, \gamma)$ and $(\gamma, \gamma \be \gamma^{-1})$ correspond to the two isotopy classes of embeddings of a fixed pair of pants $P_0$ into $\Sigma$ with a choice of peripheral elements $\al_0, \beta_0, \gamma_0 \in \pi_1(P_0)$ with $P_0$ on the left, $\al_0 \gamma_0 \beta_0 = e$ and $\al_0 \mapsto \al$. This language is used in \cite{Labourie:2009gb}.

\begin{figure}[t]
\begin{center}\begin{overpic}[scale=0.3]{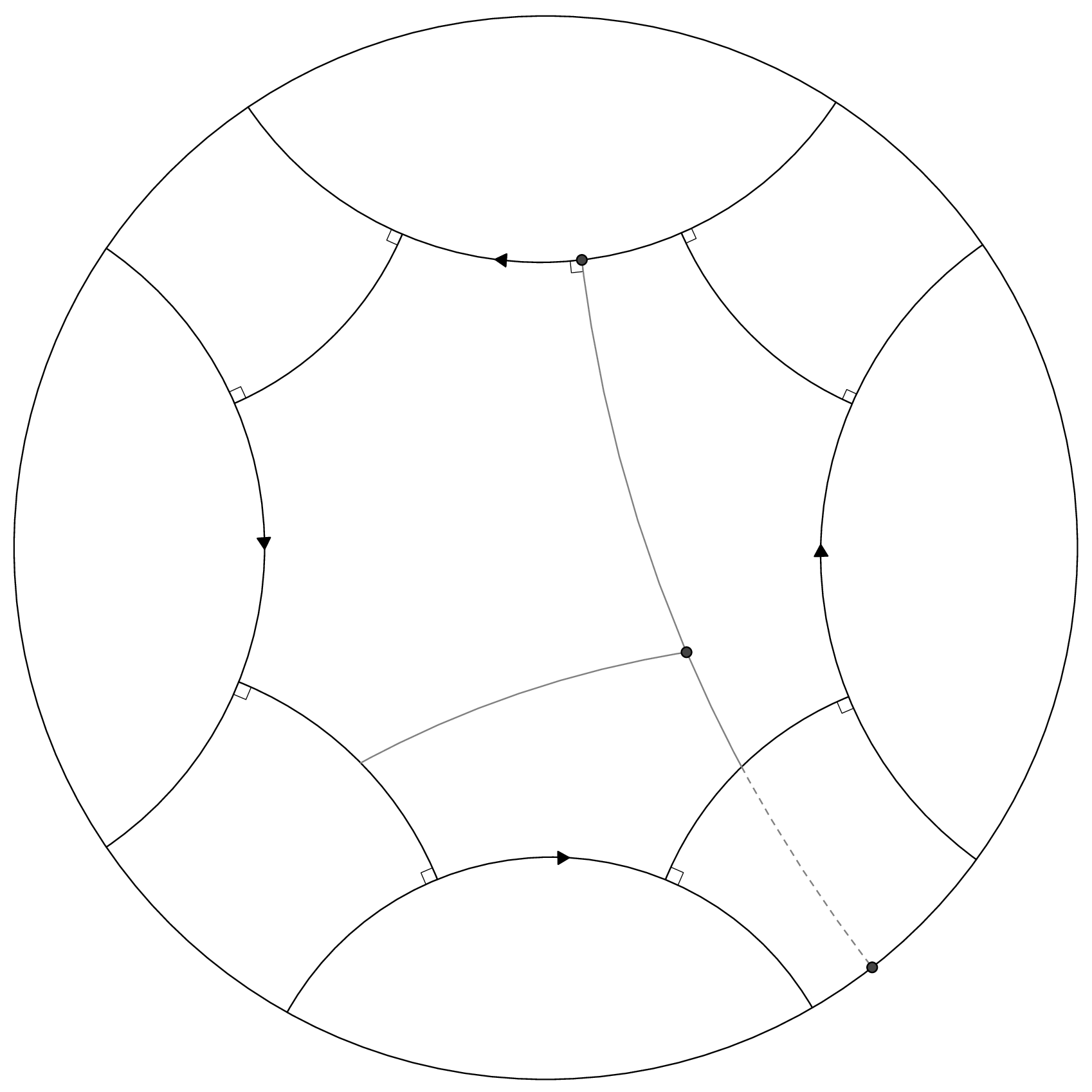}
\put(20,92){$\al^+$}
\put(77.5,91.5){$\al^-$}
\put(5,77){$\be^-$}
\put(3,20){$\be^+$}
\put(91,20){$\gamma \cdot \be^- = (\gamma \be \gamma^{-1})^-$}
\put(91,77){$\gamma \cdot \be^+ = (\gamma \be \gamma^{-1})^+$}
\put(71,4){ $\gamma^+$}
\put(23,3){$\gamma^-$}
\put(52,78){$\tilde x$}
\put(51,60){$\tilde\delta_x$}
\put(40,50){$D$}
\put(81,7){$\tilde\delta_x^+$}
\end{overpic}
\end{center}
\caption{A fundamental domain $D$ for $P$ and the lift of $\delta_x$. One can verify that $\al \gamma \beta = e$ and $\al^{-1} \cdot \beta^\pm = \gamma \cdot \beta^\pm$.}
\label{fig:hyppants}
\end{figure}

Let us fix $[P]\in \mathcal{P}_\al(\Sigma)$ with $P \in \mathcal{P}_\al(\Sigma,\si)$ and fix a good pair $(\be,\gamma)\in \pi_1(P)^2 \subset \pi_1(\Sigma)^2$.
We can draw a fundamental domain $D$ for $P$ as in Figure \ref{fig:hyppants}. 
Abusing notation and letting $\al$ also denote its geodesic representative in $\partial \Sigma$, let $x \in \al$ be such that $P_x = P$.
Lift $x$ to $\tilde x\in D$ on the geodesic $\mathfrak{G}(\al^-, \al^+) \subset \bh^2$ and let $\tilde\delta_x$ be the lift of $\delta_x$ (as defined in the previous subsection) living in this fundamental domain. 

Assuming $\be$ and $\gamma$ are not peripheral, observe that $\delta_x$ determines $P$ if and only if $\delta_x \subset P$ and has finite length, see Figure \ref{fig:pants} for an example. In particular, this means that $\delta_x$ stays inside $P$ and either self intersects or hits $\al$. This is equivalent to having $\tilde\delta_x^+$ in the set $$\tilde J_{P} =(\be^+, \gamma^-)  \cup (\gamma^+, \gamma \cdot \be^-) \subset \Sp^1_\infty$$ as shown in Figure \ref{fig:hyppants}. The orthogonal projection of $J_{P}$  to the geodesic $\mathfrak{G}(\al^-, \al^+) \subset \bh^2$ followed by the universal covering projection to $\partial \Sigma$ is injective and corresponds to  $V_{P}$. 

Now suppose only $\gamma$ is peripheral, then $\delta_x$ determines $P$ if and only if $\tilde \delta_x^+$ is in the interval $\tilde J_{P}= (\be^+, \gamma \cdot \be^-)$. We simply add in the interval $(\gamma^-, \gamma^+)$ to allow for simple arcs $\delta_x$ that hit the boundary component $\gamma$ for the scenario in the previous paragraph.

Similarly, if only $\be$ is peripheral, then $ \delta_x$ determines $P$ if and only if $\tilde\delta_x^+$ is some $\al$ translate of a point in the interval $$\tilde J_{P} = (\al \cdot \gamma^+, \gamma^-) = (\beta^-, \gamma^-) \cup \, \al \cdot (\gamma^+, \gamma \cdot \be^-) \, .$$ The technicality of translating by $\al$ arises because we chose our lift $\tilde x \in D$ and want to write $\tilde J_{P}$ as one interval.

If both $\be$ and $\gamma$ are peripheral, then $\Sigma = P$ and the interval is simply $J_{P} = (\be^-, \gamma \cdot \be^-)$.  The same sequence of projections also gives $V_{P}$ in these cases.

Let $\rho$ be a Hitchin representation of $\Sigma$ and let $B = B_{\rho}$ be the associated cross ratio.  We define the \textit{pants gap function} $H_\rho: \mathcal{P}_\al(\Sigma) \to \br$ as follows. Let $[P]\in \mathcal{P}_\al(\Sigma)$ and let $(\be, \gamma)\in \pi_1(P)^2\subset\pi_1(\Sigma)$ be a good pair. Define the auxiliary function $i_{\partial} : \pi_1(\Sigma) \to \{0,1\}$ by $i_\partial(\omega) = 1$ if $\omega$ is primitive peripheral and $i_\partial(\omega) = 0$ otherwise. Then
\begin{align*} H_\rho([P])&  = \log\left[B(\al^+, \gamma^-, \al^-, \be^+)\cdot B(\al^+, \gamma \cdot \beta^-, \al^-, \gamma^+)\right] + \\
& + i_{\partial}(\beta) \log B(\al^+, \beta^+, \al^-, \beta^-) + i_{\partial}(\gamma) \log B(\al^+, \gamma^+, \al^-, \gamma^-) \, .
\end{align*}
With this setup at hand, the McShane-Mirzakhani identity for Hitchin representations from \cite{Labourie:2009gb} states 
$$\ell_\rho(\al) = \sum_{[P]\in \mathcal{P}_\al(\Sigma)} H_\rho([P]) \,.$$
If we let $\mathbb{T} = \br / \ell_\rho(\al)\bz$ and let $J_P$ be the projection of $\tilde J_{P}$ under the composition of the projection $\pi : \br \to \mathbb{T}$ and the map $F_\rho$ defined in \eqref{eq:projection}, then the McShane-Mirzakhani identity is saying that the $J_P$ are all disjoint and give a full measure decomposition of $\mathbb{T}$. 

As in the proof of Theorem \ref{thm:identity}, for a primitive peripheral element $\be$ of $\pi_1(\Sigma)$, let $\tilde I_\be = (\be^-, \be^+)$ and $I_\be = \pi((F_\rho(\be^-), F_\rho(\be_+)))$. Using $\al = \al_1$ in some positive peripheral marking, let
$$\cO_\al(\Sigma, \cA) = \cO_{1,j}(\Sigma,\cA )\, .$$
We saw that $I_\be$ corresponds to an element $x \in \cO_\al(\Sigma, \cA)$, so let us rename this interval $I_x$.  As the sets $J_P$ are all disjoint, it follows that for $x\in \cO_\al(\Sigma, \cA)$, there is a unique $[P]\in \mathcal{P}_\al(\Sigma)$ such that $I_x \subset J_P$. This gives the analog of Proposition \ref{prop:decomp}:

\begin{Prop}
Let $\Sigma$ be a compact connected orientable surface with nonempty boundary whose double has genus at least 2.  For each $x\in \cO_\al(\Sigma, \cA)$ there exists a unique $[P]\in \mathcal{P}_\al(\Sigma)$ such that $I_x \subset J_P$.
\end{Prop}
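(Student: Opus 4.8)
Both the Basmajian intervals $I_x$ and the McShane--Mirzakhani intervals $J_P$ in $\mathbb T$ are push-forwards of arc-decompositions of $(\al^+, \al^-) \subset \Sp^1_\infty$, and the required containment already holds at that level by the hyperbolic Proposition~\ref{prop:decomp}; the plan is to transport it across the two push-forward maps: the projection $p$ used to build the hyperbolic decompositions, and the map $\pi \circ F_B$ used to build the decompositions in $\mathbb T$. Fix $x = [g]_{1,j} \in \cO_\al(\Sigma, \cA)$ and write $\be = g\al_j g^{-1}$ for the primitive peripheral element whose complementary component $\tilde I_\be = (\be^-, \be^+) \subset (\al^+, \al^-)$ of $\Sp^1_\infty \smallsetminus \partial_\infty U$ is the one assigned to $x$ in the proof of Theorem~\ref{thm:identity}, so that $I_x = \pi\bigl((F_B(\be^-), F_B(\be^+))\bigr)$. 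Under $\Phi$, $x$ corresponds to an orthogeodesic $\mathbf o \in \cO_\al(\Sigma, \sigma)$ emanating from $\al$, and unwinding the constructions of \S\ref{sec:cosets} and \S\ref{sec:connections} shows that the Basmajian interval $U_{\mathbf o} \subset \al$ equals $p(\tilde I_\be)$, where $p := \pi_\sigma \circ \eta_\al$, with $\eta_\al$ the orthogonal projection of $(\al^+, \al^-)$ onto the axis $\mathfrak G(\al^-, \al^+)$ and $\pi_\sigma$ the universal covering projection of that axis onto $\al \subset \partial\Sigma$.

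First I would record that $p$ is injective on $\tilde I_\be$ (Lemma~\ref{lem:injective}) and, for any good pair, on $\tilde J_P$ (from the description of the McShane--Mirzakhani decomposition in \S\ref{sec:connections}), and that the point preimages of $p$ are exactly the $\langle \al \rangle$-orbits, since $\eta_\al$ is an $\al$-equivariant bijection of $(\al^+, \al^-)$ onto $\mathfrak G(\al^-, \al^+)$ and $\mathfrak G(\al^-, \al^+) \to \al$ is the covering with deck group $\langle \al \rangle$. Then I would apply Proposition~\ref{prop:decomp} to $(\Sigma, \sigma)$ to get $P \in \mathcal P_\al(\Sigma, \sigma)$ with $U_{\mathbf o} \subset V_P$; let $[P] \in \mathcal P_\al(\Sigma)$ be its isotopy class, fix a good pair $(\be', \gamma') \in \pi_1(P)^2$, and recall $V_P = p(\tilde J_P)$ with $\tilde J_P$ as in \S\ref{sec:connections}. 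Since $p$ is injective on $\tilde J_P$ and $\al$ preserves $(\al^+, \al^-)$, the translates $\{\al^n \tilde J_P\}_{n \in \bz}$ are pairwise disjoint open subarcs of $(\al^+, \al^-)$; and from $p(\tilde I_\be) = U_{\mathbf o} \subset V_P = p(\tilde J_P)$ together with the description of the fibers of $p$, we obtain $\tilde I_\be \subset \bigcup_n \al^n \tilde J_P$. As $\tilde I_\be$ is connected it lies in a single component of a single translate, i.e. $\tilde I_\be \subset \al^n \tilde J_P$ for some $n \in \bz$.

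To conclude, recall that $F_B$ is increasing on $(\al^+, \al^-)$ by Lemma~\ref{L: homeo} and that $F_B \circ \al^n = F_B - n\,\ell_\rho(\al)$ by the computation in the proof of Theorem~\ref{thm:identity}; applying $\pi \circ F_B$ to the inclusion $\tilde I_\be \subset \al^n \tilde J_P$ and using the $\ell_\rho(\al)\bz$-periodicity of $\pi$ eliminates the translate and gives $I_x \subset J_P$, which proves existence. Uniqueness is immediate: $I_x$ is a nonempty open interval, so $I_x \subset J_P \cap J_{P'}$ would force $J_P \cap J_{P'} \neq \emptyset$, contradicting the pairwise disjointness of the family $\{J_P\}_{[P] \in \mathcal P_\al(\Sigma)}$ that is part of the Labourie--McShane identity. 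The step I expect to be the main obstacle is the lifting in the second paragraph: the inclusion $U_{\mathbf o} \subset V_P$ of the downstairs sets lifts to an inclusion of arcs in $\Sp^1_\infty$ only up to an $\langle \al \rangle$-translate, and one must verify that this translate disappears after passing to $\mathbb T$. The peripheral cases---when $\be'$ or $\gamma'$ is peripheral---go through verbatim using the single-interval descriptions of $\tilde J_P$ recorded in \S\ref{sec:connections}, and need no new ideas.
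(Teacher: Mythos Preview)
Your proposal is correct and follows essentially the same route the paper has in mind: the paper does not give a formal proof here but simply records, after setting up the arcs $\tilde J_P \subset \Sp^1_\infty$ and their images $J_P = \pi\circ F_B(\tilde J_P)$, that ``as the sets $J_P$ are all disjoint, it follows that for $x\in \cO_\al(\Sigma, \cA)$, there is a unique $[P]\in \mathcal{P}_\al(\Sigma)$ such that $I_x \subset J_P$.'' Your argument makes explicit exactly the content being glossed over there---that the hyperbolic containment $U_{\mathbf o}\subset V_P$ of Proposition~\ref{prop:decomp} lifts (up to an $\langle\al\rangle$-translate) to $\tilde I_\be \subset \al^n\tilde J_P$ in $(\al^+,\al^-)$, and that $\pi\circ F_B$ kills the translate---so the two approaches coincide, yours being the detailed version.
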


For $[P]\in \mathcal{P}_\al(\Sigma)$, let 
$$\cO_P(\Sigma, \cA) = \{x \in \cO_\al(\Sigma, \cA) \colon I_x \subset J_P\} \,.$$
We then immediately have the analog of Corollary \ref{cor:relation}:

\begin{Cor}
Let $\Sigma$ be a compact connected orientable surface with nonempty boundary whose double has genus at least 2 and let $[\rho]$ a Hitchin representation of $\pi_1(\Sigma)$.  For $[P]\in \mathcal{P}_\al(\Sigma)$
$$H_\rho([P]) = \sum_{x\in \cO_P(\Sigma, \cA)} G_\rho(x) \,.$$
\end{Cor}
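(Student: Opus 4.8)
The plan is to assemble three ingredients that are already in hand. First, from the proof of Theorem~\ref{thm:identity} (applied to the single boundary component $\al=\al_1$): the intervals $\{I_x : x\in\cO_\al(\Sigma,\cA)\}$ are pairwise disjoint in $\mathbb{T}=\br/\ell_\rho(\al)\bz$, their union has full Lebesgue measure (its complement being $\pi(F_B(\partial_\infty U))$, which is null by Lemma~\ref{lem:zero} together with differentiability of $\pi\circ\log\circ\mathfrak{B}$), and $\lambda(I_x)=G_\rho(x)$. Second, the Labourie--McShane identity recalled above: the intervals $\{J_P:[P]\in\mathcal{P}_\al(\Sigma)\}$ are pairwise disjoint, their union has full measure in $\mathbb{T}$, and $\lambda(J_P)=H_\rho([P])$. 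Third, the preceding Proposition: each $I_x$ is contained in a unique $J_P$, which is precisely what makes $\cO_P(\Sigma,\cA)=\{x\in\cO_\al(\Sigma,\cA):I_x\subset J_P\}$ well-behaved.

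Given these, fix $[P]\in\mathcal{P}_\al(\Sigma)$. For $x\notin\cO_P(\Sigma,\cA)$ the unique pair of pants $[P']$ with $I_x\subset J_{P'}$ satisfies $P'\neq P$, and since the $J_{P'}$ are pairwise disjoint we get $I_x\cap J_P\subset J_{P'}\cap J_P=\emptyset$; while for $x\in\cO_P(\Sigma,\cA)$ we have $I_x\subset J_P$ by definition. Hence
\[
J_P\cap\bigcup_{x\in\cO_\al(\Sigma,\cA)}I_x=\bigsqcup_{x\in\cO_P(\Sigma,\cA)}I_x .
\]
The left-hand side differs from $J_P$ only by a subset of $\mathbb{T}\smallsetminus\bigcup_{x}I_x$, which is Lebesgue-null. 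Taking $\lambda$ of both sides and using countable additivity over the disjoint family on the right,
\[
H_\rho([P])=\lambda(J_P)=\sum_{x\in\cO_P(\Sigma,\cA)}\lambda(I_x)=\sum_{x\in\cO_P(\Sigma,\cA)}G_\rho(x),
\]
which is the claimed identity.

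I expect no serious obstacle: the corollary is essentially a bookkeeping statement to the effect that the Basmajian decomposition of $\mathbb{T}$ refines the McShane--Mirzakhani decomposition, so that summing $\lambda(I_x)=G_\rho(x)$ over the cells of the Basmajian decomposition lying inside a fixed McShane--Mirzakhani cell $J_P$ recovers $\lambda(J_P)=H_\rho([P])$. The only points requiring (routine) care are that both decompositions are taken on the \emph{same} torus $\mathbb{T}=\br/\ell_\rho(\al)\bz$ with the same Lebesgue measure, and that the measure identifications $\lambda(I_x)=G_\rho(x)$ and $\lambda(J_P)=H_\rho([P])$ are exactly those recorded in the proof of Theorem~\ref{thm:identity} and in \cite{Labourie:2009gb}; both have been verified above, and the argument does not otherwise use the geometry of $\Sigma$.
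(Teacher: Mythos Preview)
Your proposal is correct and is precisely the argument the paper has in mind: the paper states the corollary as an immediate consequence of the preceding Proposition (mirroring Corollary~\ref{cor:relation}), and your write-up simply makes explicit the measure-theoretic bookkeeping that the Basmajian decomposition $\{I_x\}$ refines the McShane--Mirzakhani decomposition $\{J_P\}$ of the same torus $\mathbb{T}$, so that $\lambda(J_P)=\sum_{x\in\cO_P}\lambda(I_x)$.
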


\footnotesize 
\bibliographystyle{amsalpha}	
\bibliography{references}

\end{document}